\newlength{\defbaselineskip}
\newcommand{\setlinespacing}[1]%
{\setlength{\baselineskip}{#1 \defbaselineskip}}
\def\bbR{\mathbb{R}}
\def\bbN{\mathbb{N}}
\def\1{\mathbf{1}}
\def\bbR{\mathbb{R}}
\def\bbN{\mathbb{N}}
\theoremstyle{plain}
\newtheorem{thm}{Theorem}[section]
\newtheorem{cor}[thm]{Corollary}
\newtheorem{lem}[thm]{Lemma}
\newtheorem{prop}[thm]{Proposition}
\newtheorem{example}[thm]{Example}
\theoremstyle{definition}
\newtheorem{defn}{Definition}[section]
\def\dss{\displaystyle}
\theoremstyle{remark}
\numberwithin{equation}{section}
\begin{document}
	
	\def\dss{\displaystyle}
	
\title[  On Countably $\alpha$-Compact Topological Spaces]{On Countably $\alpha$-Compact Topological Spaces}

\author[Eman Almuhur and Muhammad Ahsan Khan]{Eman Almuhur $^1$ $\&$ Muhammad Ahsan Khan $^2$}

\address{$^{1}$ Department of Basic Science and Humanities, Faculty of Arts and Science, Applied Science Private University, Amman, Jordan, Email}
\email{e\_almuhur@asu.edu.jo}
\address{$^{2}$  Department of Mathematics, University of Kotli Azad Jammu $\&$ Kashmir, Kotli 11100, Azad Jammu $\&$ Kashmir, Pakistan.}
\email{ahsankhan388@hotmail.com}

	\begin{abstract}
		In this paper, some features of countably $\alpha$-compact topological spaces are presented and proven. The connection between countably $\alpha$-compact, Tychonoff, and $\alpha$-Hausdorff spaces is explained. The space is countably $\alpha$-compact space iff every locally finite family of non-empty subsets of such space is finite is demonstrated. The countably $\alpha$-compact space with weight greater than or equal to $\aleph_0$ is the $\alpha$-continuous image of a closed subspace of the cube $D^{\aleph_0}$ is discussed. The boundedness of $\alpha$-continuous functions mapping $\alpha$-compact spaces to other spaces is cleared. Moreover, the $\alpha$-continuous function mapping the space $X$ to the countably $\alpha$-compact space $Y$ is an $\alpha$-closed subset of $X\times Y$ is argued and proved. We explained that the $\alpha$-continuous functions mapping any topological space to a countably $\alpha$-compact space can be extended over its domain under some constraints. We claimed that the property of being $\alpha$-compact is countably $\alpha$-compact but the converse is not and the countable union of countably $\alpha$-compact subspaces of $X$ is also countably $\alpha$-compact. 	
	\end{abstract}
	\keywords{$\alpha$-Compact Space, $\alpha$-Continuous Function, Countably $\alpha$-Compact Space}
	\subjclass[2020]{54A05, 54A25, 54B10}
	\maketitle
	\section{Introduction}
	
	Topology is one of the most known mathematical disciplines because of its rich theoretical foundations and its many useful applications to science and engineering.
	Generalized open sets are an important part of general topology, and a large amount of research has concentrated on them. Extending open sets to explore various modified forms of continuity, separation axioms, and other ideas is an important topic in general topology and real analysis. Typically, the notions of $\alpha$-open sets, initiated by Njstad \cite{1} in 1965, and the generalized closed ($g$-closed) subset of a topological space presented by Levine \cite{2} in 1970, are the most well-known and also represent sources of inspiration and have been extensively studied in the literature since then. Since then, lots of mathematicians have focused on using $\alpha$-open sets and generalized closed sets to generalize different ideas in topology. Dunham \cite{3} in 1982 used $g$-closed subsets of $X$ to define a new closure operator and hence a new topological space $(X,\tau^*)$ where he transferred regularity conditions of a topological space $(X,\tau)$ to separation conditions of the new topological space $(X,\tau^*)$.  The paper is organized as follows: By means of section~2, we want to make sure that the reader has become familiar to basic notations and useful facts, needed in this area when we are going to start the main work in section~3. Section ~3 is an important section section of this paper which is concerned with the countably $\alpha$-compact Topological spaces.

	\section{Notations and preliminaries}
	
	In a topological space $(X,\tau)$, if $A$ is a subset of $X$ such that $A\subseteq Int(Cl(Int(A)))$, then $A$ is said to be $\alpha$-open. $A$ is $\alpha$-closed if $A\supseteq Cl(Int(Cl(A)))$. A subset $A$ of a topological space $(X,\tau)$ is called $g$-closed if $C(A)\subseteq O(A)\subseteq \tau$ and $A$ is $g$-closed if $C(A)\subseteq O(A)\subseteq \tau$ (see  \cite{3}).  Najastad \cite{1} proposed that each open set is an $\alpha$-open set. Dunham \cite{4} proved that the $g$-closedness property is closed under arbitrary union and a topological space is $T_{\frac{1}{2}}$ provided that every $g$-closed set is closed.       
	
	Moreover, a space $X$ is $T_{\frac{1}{2}}$ if and only if every $\{x\}$ such that $x\in X$ is either closed or open if and only if every subset of $X$ is the intersection of all open or closed sets containing it if and only if for each subset $A$ of $X$, $A^\prime$ is closed where $A^\prime$ is the set of all derived sets containing $A$. 
	Now, $A$ is $g$-open if $X-A$ is $g$-closed. Typically, $\alpha$-closure of a subset of a topological space $X$ is the intersection of each $\alpha$-closed set containing it and denoted by $Cl_\alpha(A)$ where $A\subseteq X$ whereas $\alpha$-interior of a subset is the union of each $\alpha$-open set contained in it and denoted by $Int_\alpha(B)$ where $B\subseteq X$  \cite{5}. The $g$-closure of a subset $D$ of $X$ denoted by $Cl^*(D)$ is the intersection of the $g$-closed sets containing $D$ \cite{6}. The family of $\alpha$-open subsets of $X$ is denoted by $\alpha O(X)$ and the family of $\alpha$-closed subsets of $X$ is denoted by $\alpha C(X)$. The family of all generalized open subsets of $X$ is denoted by $GO(X)$ and the family of all generalized closed subsets of $X$ is denoted by $GC(X)$. $O(X,x)$ denotes the set of all open subsets of $X$. A function $f:X\longrightarrow Y$ is $\alpha$-continuous if the inverse image of each open set in $Y$ is $\alpha$-open in $X$ \cite{3}. A function $f$ is $g$-continuous if the inverse image of each $g$-open set in $Y$ is $g$-open in $X$ \cite{7}. 
	
	Cs’asz’ar \cite{8} proposed the notion of generalized topological spaces in the twentieth century, which has been researched by lots of mathematicians all over the world. As a result, mathematicians adopted a new approach, seeking to extend topological concepts to this new domain. A $GT \mu$ (short for generalized topology $\mu$) is a collection of subsets of $X$ that is closed under arbitrary unions on a nonempty set $X$(\cite{9}). The elements of $\mu$ will be referred to as $\mu$-open sets, and a subset $A$ of $(X,\mu)$ will be referred to as $\mu$-closed if $X-A$ is $\mu$-open. Clearly, a subset $A$ of $(X,\mu)$ is $\mu$-open if and only if there exists a $\mu$-open neighborhood of $x$ that is contained in $A$, or alternatively, $A$ is the union of $\mu$-open subsets for each $x\in A$. $(X,\mu)$ will be referred to as generalized topological space (briefly $GTS$). A topological space $(X,\tau)$ is said to be $\alpha$-$T_1$ if for each distinct points $x$ and $y$ in $X$, there exist two $\alpha$-open subsets $u$ and $v$ such that $x\in u$ but not $y$ and $y\in v$ but not $x$ (\cite{10}). $(X,\tau)$ is termed $g$-$T_1$ if there exist two $g$-open subsets $u$ and $v$ such that $x\in u$ but not $y$ and $y\in v$ but not $x$ (\cite{11}). $(X,\tau)$ is called $\alpha$-$T_2$ if for each distinct points $x$ and $y$ in $X$, there exist two disjoint $\alpha$-open subsets $u$ and $v$ such that, $x\in u$ and $y\in v$ (\cite{12}). $(X,\tau)$ is said to $g$-$T_2$ if for each distinct points $x$ and $y$ in $X$, there exist two disjoint $g$-open subsets $u$ and $v$ such that $x\in u$ and $y\in v$ (\cite{13}).

	\section{Countably $\alpha$-Compact Spaces}
	\begin{defn}\cite{14}
		A topological space $(X,\tau)$ is said to be countably $\alpha-$ compact  if each countable set of open $\alpha$-compact subsets that covers $X$ has a finite subcover.
	\end{defn}
	\begin{lem}
		Every $\alpha$-compact is countably $\alpha$-compact and each countably $\alpha$-compact is countably compact but the convers is not true.
	\end{lem}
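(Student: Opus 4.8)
The statement really has three assertions: (i) every $\alpha$-compact space is countably $\alpha$-compact; (ii) every countably $\alpha$-compact space is countably compact; and (iii) neither implication reverses. The plan is to dispatch (i) and (ii) by the trivial cover-restriction arguments and to spend the real effort on producing the two counterexamples in (iii).

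For (i), suppose $X$ is $\alpha$-compact and let $\{U_n : n\in\mathbb{N}\}$ be a countable cover of $X$ by open $\alpha$-compact sets (the cover type appearing in Definition~3.1). This is in particular an $\alpha$-open cover of $X$ (every open set is $\alpha$-open, by Njåstad), so by $\alpha$-compactness it admits a finite subcover; that finite subcover is the desired one, so $X$ is countably $\alpha$-compact. For (ii), let $X$ be countably $\alpha$-compact and let $\{V_n : n\in\mathbb{N}\}$ be a countable cover of $X$ by ordinary open sets. The point to check is that one may refine or replace this by a countable cover of the form required in Definition~3.1; the cleanest route is to observe that in the hypothesis of "countably compact" we only need the finite-subcover conclusion for countable open covers, and every countable open cover already is a countable family of open sets, so the implication follows once one notes that $\alpha$-compactness of the pieces is not needed to derive countable compactness — a countably $\alpha$-compact space is trivially countably compact because the defining condition on the relevant covers is stronger. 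I would state this carefully so that the reader sees the inclusion of cover-classes goes the right way.

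The substantive part is (iii), exhibiting spaces that separate the three notions. For "countably $\alpha$-compact but not $\alpha$-compact", the plan is to take a standard countably compact, non-compact space — the first uncountable ordinal $[0,\omega_1)$ with the order topology is the canonical choice — and verify that its $\alpha$-open sets behave well enough that it is countably $\alpha$-compact yet fails to have finite subcovers for a suitable uncountable $\alpha$-open cover; here one uses that in many well-behaved spaces the $\alpha$-open sets coincide with or are controlled by the open sets, so the classical argument carries over. For "countably compact but not countably $\alpha$-compact", I would look for a space whose $\alpha$-topology is strictly richer than its topology, so that there are enough extra $\alpha$-open sets to build a bad countable $\alpha$-open cover while the ordinary topology remains countably compact; a candidate is a countably compact space with an isolated-point structure or a scattered subspace that becomes $\alpha$-open but whose complement is not, breaking the finite-subcover property at the $\alpha$-level.

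The main obstacle I anticipate is (iii), and specifically controlling the $\alpha$-open sets of the proposed examples: since $A$ is $\alpha$-open iff $A\subseteq \operatorname{Int}(\operatorname{Cl}(\operatorname{Int}(A)))$, one must compute interiors and closures in the example spaces precisely enough to confirm both that the separating cover is genuinely $\alpha$-open and that no finite (respectively, no countable) subcover exists. A secondary subtlety is making sure the cover in the counterexample for non-$\alpha$-compactness consists of \emph{open $\alpha$-compact} sets as Definition~3.1 demands, rather than merely $\alpha$-open sets; if the definition is read with that restriction, the counterexample must be chosen so that its open sets are $\alpha$-compact subspaces, which may force the use of a space built from $\alpha$-compact blocks (e.g.\ a disjoint-sum or ordinal-based construction) glued so that countable subfamilies cover but finite ones do not.
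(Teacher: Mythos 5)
The decisive gap is in your part (iii): what you give there is a plan, not a proof. You name $[0,\omega_1)$ as a candidate for one converse failure and only gesture at ``a space whose $\alpha$-topology is strictly richer than its topology'' for the other, and you yourself list the verification of the $\alpha$-open sets in these spaces as an anticipated obstacle rather than carrying it out. Since the whole nontrivial content of the lemma is the failure of the two converses, leaving both counterexamples unverified means the statement is not actually proved. For comparison, the paper does not prove the lemma either; it immediately follows it with Example~3.3, which supplies concrete witnesses: the space $X=\{1\}\cup\{\beta_i : i\in\mathbb{N}\}$ with $\tau=\{\emptyset,\{1\}\}$ (claimed countably compact but not countably $\alpha$-compact, precisely because in such a space the $\alpha$-open sets are strictly more plentiful than the open ones -- the phenomenon you describe abstractly), and the ordinal space $W_0$ sitting inside $W=W_0\cup\{\omega_1\}$ for the other direction. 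Producing and checking examples of exactly this kind is the content your proposal defers.

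A second, smaller problem is your part (ii). Your argument there is essentially the assertion that ``the defining condition on the relevant covers is stronger,'' which is circular as written, and whether it is even true depends on which reading of Definition~3.1 you adopt. If countable $\alpha$-compactness is taken in the standard sense (every countable $\alpha$-open cover has a finite subcover), then the implication is immediate because every open set is $\alpha$-open, so countable open covers form a subclass of countable $\alpha$-open covers; you should say exactly that. But under the paper's literal wording -- countable covers by \emph{open $\alpha$-compact} subsets -- the class of covers being controlled is \emph{narrower} than the class of all countable open covers, and the implication to countable compactness does not follow from the definition at all; an arbitrary countable open cover need not consist of $\alpha$-compact pieces. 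You flag this reading only as a ``secondary subtlety'' for the counterexamples, but it in fact undermines your argument for (ii) as well, so the definitional ambiguity needs to be resolved before either (ii) or (iii) can be settled.
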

	\begin{example}
		If $X=\left\{1\right\}\bigcup\left\{\beta_i:i\in\bbN\right\}$ and $\tau=\left\{\phi,\left\{1\right\}\right\}$, then $X$ is countably compact but not a $\alpha$-countably compact. Another example of $\alpha$-compact which is not countably $\alpha$-compact is the space of all cardinal numbers $W_0$ since if it is embedded in the space $W=W_0\bigcup\left\{\omega_1\right\}$, it will not be a closed subspace. A topological space $(X,\tau)$ is $\alpha$-compact if and only if it is countably $\alpha$-compact with Lindelof property.
	\end{example}
	\begin{thm}
		In the topological space $(X,\tau)$, the countable union of countably $\alpha$-compact subspaces of $X$ is also countably $\alpha$-compact.
	\end{thm}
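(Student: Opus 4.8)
The plan is to work directly from the covering definition. Write $Y=\bigcup_{n\in\bbN}A_n$ with each $A_n$ a countably $\alpha$-compact subspace of $X$, and give $Y$ the subspace topology; the goal is to show that an arbitrary countable cover $\mathcal{U}=\{U_m:m\in\bbN\}$ of $Y$ by $\alpha$-open sets admits a finite subcover. The natural strategy is to push $\mathcal{U}$ down onto each piece $A_n$, use countable $\alpha$-compactness of $A_n$ to extract a finite subcover there, and then reassemble.

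First I would establish the subspace behaviour of $\alpha$-open sets: if $U$ is $\alpha$-open in $Y$, then $U\cap A_n$ is $\alpha$-open in the subspace $A_n$. This is the point at which one invokes the defining inclusion $U\subseteq Int(Cl(Int(U)))$ together with the way $Int$ and $Cl$ interact with relative topologies (equivalently, that the $\alpha$-open sets of a space form a topology, after Njåstad's theorem, and that this construction is compatible with restriction). Granting this, for each fixed $n$ the family $\{U_m\cap A_n:m\in\bbN\}$ is a countable $\alpha$-open cover of $A_n$, so there is a finite index set $F_n\subseteq\bbN$ with $A_n\subseteq\bigcup_{m\in F_n}U_m$. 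A convenient reformulation I would carry throughout: since a finite union of $\alpha$-open sets is $\alpha$-open, one may replace $\mathcal{U}$ by the increasing $\alpha$-open cover $V_k:=U_1\cup\cdots\cup U_k$, and then countable $\alpha$-compactness of $A_n$ yields a single index $k_n$ with $A_n\subseteq V_{k_n}$.

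Finally one reassembles: $Y=\bigcup_n A_n\subseteq\bigcup_n V_{k_n}$. The delicate step — and the one I expect to be the real obstacle — is turning this into a \emph{finite} subcover, since $\{k_n:n\in\bbN\}$ is a priori only a countable set of indices and $\bigcup_n F_n$ need not be finite. When the index set is finite the supremum $N:=\sup_n k_n$ is finite and $Y\subseteq V_N=U_1\cup\cdots\cup U_N$ is immediately the desired finite subcover; in the genuinely countable case the crux is to show that the sequence $(k_n)$ is bounded. I would try to force this from the hypotheses on the $A_n$ and the equivalences recorded in the Example (in particular the interplay between $\alpha$-compactness, countable $\alpha$-compactness and the Lindelöf property), so that for a given cover only finitely many of the pieces are ever "active". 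Isolating and proving that boundedness is the heart of the matter; the two reductions above are routine.
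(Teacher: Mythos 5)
Your two preliminary reductions are fine (restricting the cover to each piece, and replacing $\{U_m\}$ by the increasing sequence $V_k=U_1\cup\cdots\cup U_k$), but the proposal stops exactly at the step that carries all the weight: you never prove that the sequence $(k_n)$ is bounded, you only say you would ``try to force this from the hypotheses.'' That is a genuine gap, and it cannot be filled, because in the stated generality the boundedness claim is false. Take $X$ a countably infinite discrete space and $A_n=\{x_n\}$ its singletons. Each $A_n$ is finite, hence $\alpha$-compact and countably $\alpha$-compact under any of the definitions in play, and $X=\bigcup_{n\in\bbN}A_n$ is a countable union of such subspaces; yet the countable cover of $X$ by the singletons themselves consists of open (indeed clopen, hence $\alpha$-open and $\alpha$-compact) sets and has no finite subcover, so $X$ is not countably $\alpha$-compact. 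In this example your $k_n$ is forced to be at least $n$, so no appeal to the interplay with the Lindel\"of property or the equivalences recorded in the Example can rescue the argument: any proof of the boundedness would prove a false statement. The theorem as literally stated needs extra hypotheses (this is the same phenomenon as for ordinary countable compactness, which is preserved by \emph{finite} unions but not countable ones).

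For comparison, the paper's own argument does not follow your covering-the-pieces route at all: it fixes a family of $\alpha$-closed sets $F_\beta$, assumes a single piece $Y$ is a \emph{closed} subspace, extracts a countable subcover of $Y$, and then asserts that the sets $(X-F_{\beta_i})\cap W_{\beta_i}$ give a countable subcover of $X$ --- it never confronts the finiteness issue you correctly identified as the crux, and it tacitly uses hypotheses (closedness of the pieces, a very particular relation between the cover and the $F_\beta$) that are not in the statement. So your diagnosis of where the difficulty lies is more honest than the paper's treatment, but neither your sketch nor the paper's proof establishes the theorem as stated; to get a correct result you should either restrict to finite unions or add hypotheses strong enough to exclude the discrete counterexample above.
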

	\begin{proof}
		Suppose that $\widetilde{F}=\left\{F_\beta: F_\beta\quad \hbox{is}\quad \alpha-\hbox{closed},\beta\in\Gamma\right\}$ be a family of $\alpha$-closed subsets of $X$. Consider $\left\{W_\beta:\beta\in\Gamma\right\}$ be an a countable cover of $Y$ where $W_\beta$ is a $\alpha$-open subset of $Y$. Let $\bigcup_{\beta\in\Gamma}W_\beta$, then $Y$ is a closed subspace of $X$, there exists a countable subset $\left\{\beta_1,\beta_2,\beta_3,\cdots\right\}$ of $Y$, such that $\displaystyle\bigcup_{i=1}^\infty W_{\beta_i }$ is a countable subcover of $Y$, since $Y$ is countably $\alpha$-compact subspace of $X$. Now, $\left\{(X-F_\beta)\cap W_\beta:\beta\in\Gamma\right\}$ covers $X$, hence $X=\displaystyle \bigcup_{i=1}^{\infty} (X-F_{\beta_i})\cap W_{\beta_i }$ is a countable subcover of $X$. Thus $X$ is countably $\alpha$-compact. If $Y$ is a closed subspace of a topological space $(X,\tau)$, then each $\alpha$-subset of $X$ is a $\alpha$-subset of $Y$ provided that it is a subset of $Y$ basically.
	\end{proof}
	\begin{cor} \label{p}
		A topological space $(X,\tau)$ is countably $\alpha$-compact if and only if every countable family of $\alpha$-closed subsets that has the finite intersection property has non empty intersection. If and only if for each decreasing sequence of $\alpha$-closed subsets $\left\{ F_{i}\right\}_{i\in\bbN}$ of $X$, we have $\displaystyle\bigcap_{i\in\bbN}F_i\neq\emptyset$.
	\end{cor}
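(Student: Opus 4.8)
The plan is to derive the corollary from Definition 3.1 by the classical duality between open covers and families of closed sets having the finite intersection property, transported verbatim to the $\alpha$-setting. The one structural fact needed is Njstad's theorem (\cite{1}), which guarantees that $\alpha O(X)$ is itself a topology on $X$ finer than $\tau$; consequently the complement of an $\alpha$-closed set is $\alpha$-open, an arbitrary intersection of $\alpha$-closed sets is $\alpha$-closed, and in particular a finite intersection of $\alpha$-closed sets is $\alpha$-closed. Throughout, "countably $\alpha$-compact" is read as: every countable cover of $X$ by $\alpha$-open sets admits a finite subcover — the reading used implicitly in the proof of Theorem~3.4.

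For the first equivalence, suppose first that $X$ is countably $\alpha$-compact and let $\{F_i\}_{i\in\bbN}$ be a countable family of $\alpha$-closed sets with the finite intersection property. If $\bigcap_{i\in\bbN}F_i=\emptyset$, then $\{X\setminus F_i\}_{i\in\bbN}$ is a countable family of $\alpha$-open sets with $\bigcup_{i\in\bbN}(X\setminus F_i)=X\setminus\bigcap_{i\in\bbN}F_i=X$, hence an $\alpha$-open cover; a finite subcover $X=\bigcup_{k=1}^{n}(X\setminus F_{i_k})$ gives $\bigcap_{k=1}^{n}F_{i_k}=\emptyset$, contradicting the finite intersection property. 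Conversely, assume the closed-set condition and let $\{U_i\}_{i\in\bbN}$ be a countable $\alpha$-open cover with no finite subcover. Then for every finite $J\subseteq\bbN$ we have $\bigcup_{i\in J}U_i\neq X$, i.e. $\bigcap_{i\in J}(X\setminus U_i)\neq\emptyset$, so $\{X\setminus U_i\}_{i\in\bbN}$ is a countable family of $\alpha$-closed sets with the finite intersection property; hence $\bigcap_{i\in\bbN}(X\setminus U_i)\neq\emptyset$, that is, $\bigcup_{i\in\bbN}U_i\neq X$, contradicting that $\{U_i\}$ covers $X$.

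For the second equivalence, the finite intersection condition trivially implies the decreasing-sequence condition, since a decreasing sequence $F_1\supseteq F_2\supseteq\cdots$ of nonempty $\alpha$-closed sets has the finite intersection property (any finite subfamily has intersection equal to its member of largest index). For the reverse implication, given a countable family $\{G_i\}_{i\in\bbN}$ of $\alpha$-closed sets with the finite intersection property, set $F_n=\bigcap_{i=1}^{n}G_i$. Each $F_n$ is $\alpha$-closed (finite intersection of $\alpha$-closed sets), nonempty (finite intersection property), and $F_1\supseteq F_2\supseteq\cdots$; applying the decreasing-sequence hypothesis gives $\bigcap_{n\in\bbN}F_n\neq\emptyset$, and since $\bigcap_{n\in\bbN}F_n=\bigcap_{i\in\bbN}G_i$ we are done. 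Chaining the two equivalences yields the corollary.

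The argument is essentially bookkeeping, so there is no deep obstacle; the only points that need care are the appeal to \cite{1} to make De Morgan's laws and finite-intersection stability available in the $\alpha$-topology (without it, "an intersection of $\alpha$-closed sets is $\alpha$-closed" is not automatic), and the observation that the decreasing-sequence statement is to be read for sequences of \emph{nonempty} sets. One should also confirm that Definition~3.1 is indeed intended as the $\alpha$-open-cover property, since as literally printed it refers to "open $\alpha$-compact subsets"; the proof of Theorem~3.4 makes clear that the $\alpha$-open-cover reading is the one in force.
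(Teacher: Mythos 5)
Your argument is correct: it is the standard De Morgan duality between countable $\alpha$-open covers and countable families of $\alpha$-closed sets with the finite intersection property, plus the telescoping trick $F_n=\bigcap_{i=1}^{n}G_i$ for the decreasing-sequence version, and your appeal to Nj{\aa}stad's result that $\alpha O(X)$ is a topology is exactly what makes the finite intersections $\alpha$-closed. Note that the paper states this corollary without any proof at all (and its label ``corollary'' to the preceding theorem is nominal --- your derivation straight from the definition is the real logical route), so there is no authorial argument to compare against; your write-up in fact supplies two corrections the paper leaves implicit, namely the reading of Definition~3.1 as the $\alpha$-open-cover property and the restriction of the decreasing-sequence condition to \emph{nonempty} $\alpha$-closed sets, without which the stated equivalence is false even for compact spaces.
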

	
	\begin{thm}
		A topological space $(X,\tau)$ is countably $\alpha$-compact if and only if every locally finite family of non-empty subsets of $X$ is finite.
	\end{thm}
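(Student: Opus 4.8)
The plan is to get both directions out of the decreasing-sequence form of Corollary~\ref{p}, leaning only on the definitional fact that a point $x$ belongs to $Cl_\alpha(B)$ if and only if every $\alpha$-open set containing $x$ meets $B$ (immediate, since $Cl_\alpha(B)$ is the intersection of the $\alpha$-closed supersets of $B$ and the $\alpha$-open sets are precisely the complements of the $\alpha$-closed ones). I read ``locally finite'' in the $\alpha$-sense: every point of $X$ has an $\alpha$-open neighbourhood meeting only finitely many members of the family; this is the convention under which both implications close up. Since a subfamily of a locally finite family is locally finite, for the forward implication it suffices to treat a family indexed by $\bbN$.

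For necessity I would suppose $X$ is countably $\alpha$-compact and, for contradiction, that $\{A_n\}_{n\in\bbN}$ is a locally finite family of non-empty subsets of $X$. The key move is to form $F_n=Cl_\alpha\!\left(\bigcup_{k\ge n}A_k\right)$: these are $\alpha$-closed, non-empty (each contains $A_n$), and decreasing, so Corollary~\ref{p} supplies a point $x\in\bigcap_{n\in\bbN}F_n$. Local finiteness gives an $\alpha$-open $U\ni x$ meeting $A_k$ only for $k$ in some finite set $K$; with $N>\max K$, the set $U$ is then an $\alpha$-open neighbourhood of $x$ disjoint from $\bigcup_{k\ge N}A_k$, forcing $x\notin F_N$ --- a contradiction. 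This direction needs nothing beyond Corollary~\ref{p} and the elementary behaviour of $Cl_\alpha$.

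For sufficiency I would argue by contraposition. If $X$ is not countably $\alpha$-compact, Corollary~\ref{p} yields a decreasing sequence $F_1\supseteq F_2\supseteq\cdots$ of non-empty $\alpha$-closed sets with $\bigcap_{n\in\bbN}F_n=\emptyset$. The intersection being empty while each term is non-empty, the sequence is not eventually constant, and a routine extraction of a subsequence lets me assume $F_n\supsetneq F_{n+1}$ for all $n$; I then pick $x_n\in F_n\setminus F_{n+1}$. These $x_n$ are pairwise distinct (for $m>n$ we have $x_m\in F_{n+1}$ but $x_n\notin F_{n+1}$), so $\{\{x_n\}:n\in\bbN\}$ is an infinite family of non-empty subsets of $X$, and it is locally finite: given $y\in X$, emptiness of $\bigcap_nF_n$ provides $m$ with $y\notin F_m$, and then $X\setminus F_m$ is an $\alpha$-open neighbourhood of $y$ containing $x_n$ for at most the finitely many indices $n<m$ (since $x_n\in F_n\subseteq F_m$ once $n\ge m$). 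This infinite locally finite family of non-empty sets contradicts the hypothesis, so $X$ is countably $\alpha$-compact.

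The one delicate point --- and where I would expect an objection --- is exactly which neighbourhoods ``locally finite'' refers to. With the $\alpha$-open reading above both constructions work verbatim. If ordinary open neighbourhoods were intended instead, necessity would survive unchanged (an open neighbourhood is $\alpha$-open and $Cl_\alpha(B)\subseteq Cl(B)$), but in sufficiency one would need an ordinary open neighbourhood of $y$ contained in $X\setminus F_m$, which $\alpha$-closedness of $F_m$ does not by itself provide; that would demand extra separation hypotheses on $(X,\tau)$. I would therefore state the theorem with the $\alpha$-local-finiteness convention and note this explicitly.
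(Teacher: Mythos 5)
Your argument is correct, and for the direction the paper actually argues it is essentially the same idea, executed more carefully: like the paper, you reduce the ``only if'' direction to Corollary~\ref{p} by turning a locally finite family into a decreasing sequence of $\alpha$-closed sets, but where the paper's proof merely asserts a passage to ``non-empty $\alpha$-closed subsets'' and writes the sequence ambiguously (its $K_j=\bigcup_{j\in\bbN}\overline{F_j}$ does not genuinely depend on $j$; the intended objects are the closures of the tails), you make this explicit with $F_n=Cl_\alpha\bigl(\bigcup_{k\ge n}A_k\bigr)$ and justify the contradiction via the neighbourhood characterization of $Cl_\alpha$. The substantive difference is that the paper's proof stops there: it never addresses the ``if'' direction at all, whereas you supply it by contraposition, extracting a strictly decreasing subsequence from a decreasing sequence of non-empty $\alpha$-closed sets with empty intersection and building an infinite locally finite family of singletons $\{x_n\}$ with $x_n\in F_n\setminus F_{n+1}$, witnessed by the $\alpha$-open sets $X\setminus F_m$. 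That construction is standard but genuinely missing from the paper, so your write-up proves the full equivalence while the paper proves only half. Your closing caveat is also well taken: the paper never says whether ``locally finite'' refers to open or $\alpha$-open neighbourhoods, and your observation that necessity works under either reading while sufficiency needs the $\alpha$-open reading (since $X\setminus F_m$ is only guaranteed $\alpha$-open) identifies exactly the convention under which the stated equivalence is provable; making that convention explicit is an improvement on the source.
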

	\begin{proof}
		By contradiction, suppose that some locally finite families of non-empty subsets of $X$ are infinite. So, there exists a locally finite family $\left\{F_i:i\in\bbN\right\}$ of non-empty $\alpha$-closed subsets of $X$. By corollary \ref{p}, $K_j=\displaystyle\bigcup_{j\in\bbN}\overline{F_j}$ forms a decreasing sequence such that, $\displaystyle\bigcap_{j\in\bbN}K_j=\emptyset$. Thus $X$ is not countably $\alpha$-compact which is a contradiction.
	\end{proof}
	\begin{thm}
		The sum $\displaystyle\oplus_{\beta\in\Gamma}X_\beta$is countably $\alpha$-compact for all $X_\beta\neq\emptyset$  if and only if $X_\beta$ is countably $\alpha$-compact for all $\beta\in\Gamma$and $\Gamma$ is finite.
	\end{thm}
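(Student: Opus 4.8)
The plan is to prove the two implications separately, leaning on the two earlier theorems of this section together with the elementary fact that each summand $X_\beta$ sits inside $S:=\oplus_{\beta\in\Gamma}X_\beta$ as a clopen subspace, hence is simultaneously $\alpha$-open and $\alpha$-closed in $S$, and that the $\alpha$-open subsets of $X_\beta$ are precisely the $\alpha$-open subsets of $S$ that lie in $X_\beta$ (closures and interiors of subsets of a clopen set agree whether computed in $X_\beta$ or in $S$). For the ``if'' direction, suppose $\Gamma=\{\beta_1,\dots,\beta_n\}$ is finite and each $X_{\beta_i}$ is countably $\alpha$-compact. Then $S=\bigcup_{i=1}^{n}X_{\beta_i}$ presents $S$ as a finite, hence countable, union of countably $\alpha$-compact subspaces, so $S$ is countably $\alpha$-compact by the theorem on countable unions proved above. (If a direct argument is preferred: a countable $\alpha$-open cover of $S$ restricts on each $X_{\beta_i}$ to a countable $\alpha$-open cover of $X_{\beta_i}$, which admits a finite subcover, and the union of these $n$ finite subcovers covers $S$.)

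For the ``only if'' direction, assume $S$ is countably $\alpha$-compact and every $X_\beta\neq\emptyset$. Each $X_\beta$ is an $\alpha$-closed subspace of the countably $\alpha$-compact space $S$, and an $\alpha$-closed subspace of a countably $\alpha$-compact space is again countably $\alpha$-compact (this is the transfer of $\alpha$-closed sets to closed subspaces recorded at the end of the proof of the countable-union theorem, combined with Corollary~\ref{p} applied inside $X_\beta$); hence each $X_\beta$ is countably $\alpha$-compact. To force $\Gamma$ finite, I would consume the hypothesis that every summand is non-empty: pick $x_\beta\in X_\beta$ for each $\beta\in\Gamma$ and set $\mathcal{A}=\{\{x_\beta\}:\beta\in\Gamma\}$. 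Its members are non-empty and pairwise distinct since the $X_\beta$ are pairwise disjoint, so $|\mathcal{A}|=|\Gamma|$; and $\mathcal{A}$ is locally finite in $S$, because any $y\in S$ lies in a unique summand $X_{\beta_0}$, which is an open neighbourhood of $y$ meeting only the single member $\{x_{\beta_0}\}$ of $\mathcal{A}$. By the theorem characterizing countable $\alpha$-compactness through locally finite families, $\mathcal{A}$ must be finite, whence $\Gamma$ is finite.

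I expect the one genuinely new step to be this last one, namely recognizing that the hypothesis ``all $X_\beta\neq\emptyset$'' is exactly what should be fed, through the singleton family $\{\{x_\beta\}\}$, into the locally-finite-family criterion; the point requiring a little care is the verification that this family is locally finite in the sum topology, which works only because every summand is open in $S$. The remaining ingredients — that $\alpha$-open sets of a clopen subspace are inherited from the ambient space, and that $\alpha$-closed subspaces inherit countable $\alpha$-compactness — are routine from the definitions and already implicit in the earlier arguments, so I would dispose of them in a sentence rather than isolating a lemma.
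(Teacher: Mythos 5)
Your proof is correct, and it is in fact more complete than the paper's own argument. The paper's proof follows the same skeleton you use in the necessity direction --- each summand is an $\alpha$-closed (indeed clopen) subspace of the sum and hence inherits countable $\alpha$-compactness --- but it then simply asserts ``and $\Gamma$ is finite'' with no justification at all, and it disposes of the sufficiency direction with ``the result holds obviously,'' even restricting there to a family indexed by $\bbN$. The genuinely new content in your write-up is precisely the step the paper omits: deducing finiteness of $\Gamma$ by choosing $x_\beta\in X_\beta$ for each $\beta$ (this is where the hypothesis $X_\beta\neq\emptyset$ is actually consumed) and feeding the family of singletons $\left\{\left\{x_\beta\right\}:\beta\in\Gamma\right\}$ into the earlier theorem characterizing countable $\alpha$-compactness by finiteness of locally finite families of non-empty subsets. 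Your verification of local finiteness is exactly right --- each point's own summand is an open neighbourhood meeting exactly one singleton, which works only because every summand is open in the sum --- and disjointness of the summands makes $\beta\mapsto\left\{x_\beta\right\}$ injective, so finiteness of the family gives finiteness of $\Gamma$. (An equivalent route the paper could have taken: if $\Gamma$ were infinite, countably many distinct summands together with the union of all remaining summands would form a countable $\alpha$-open cover with no finite subcover.) Your sufficiency argument, whether via the countable-union theorem or by restricting a countable cover to each of the finitely many clopen summands and assembling the finite subcovers, is likewise sound. So the two proofs share the same overall plan, but yours supplies the two substantive verifications that the paper leaves as bare assertions.
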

	\begin{proof}
		Suppose that the sum $\displaystyle\oplus_{\beta\in\Gamma}X_\beta$ is countably $\alpha$-compact for all $X_\beta\neq\emptyset$ , then all spaces $X_\beta$ are $\alpha$-compact because each one is an $\alpha$-closed subspace of $X$ and $\Gamma$ is finite. Conversely, if $\left\{X_j:j\in\bbN\right\}$ is a family of countably $\alpha$-compact spaces, then the result holds obviously.
	\end{proof}
	A function $f:X\longrightarrow Y$ is said to be $\alpha$-continuous if for all open subset $V\subseteq Y$, $f^{-1} (V)\subseteq X$ is open (\cite{15}). Every $\alpha$-continuous function is $\alpha$-continuous but the converse needs not to be true and every $g$-continuous function is $\alpha$-continuous but the converse needs not to be true. A function $f:X\longrightarrow Y$ is $\alpha$-compact if and only if for every $x\in X$ and open (resp. closed) subset $V\subseteq Y$ containing $f(x)$, there exists a $\alpha$-open (resp. closed) subset $U\subseteq X$ such that $f(U)\subseteq Y$. The composition of $\alpha$-continuous and a continuous function is $\alpha$-continuous but the composition of two $\alpha$-continuous functions needs not to be continuous. A function $f:X\longrightarrow Y$ is be $\alpha$-open if the image of every open (respectively closed) subset of $X$ is $\alpha$-open (respectively $\alpha$-closed) subset of $Y$ (\cite{16} ). A function $f:X\longrightarrow Y$ is $\alpha$-open if and only if for each neighborhood $W$ of a point $x$  in  $X$, there exists a $\alpha$-open subset $V$ in $Y$ such that containing $f(x)$ such that $V\subseteq f(W)$ (\cite{17}).
	
	\begin{prop}
		Let  $(X,\tau)$ and $(Y,\sigma)$ be the topological spaces, if $f:X\longrightarrow Y$ is an $\alpha$-continuous function, then:
		\begin{itemize}
			\item[\rm(i)]  If $X$ is a countably $\alpha$-compact space, then $f$ is bounded.

			\item[\rm(ii)] If $Y$ is $\alpha$-compact, then the graph of $f$ is an $\alpha$-closed subset of $X\times Y$.
		\end{itemize}
		
	\end{prop}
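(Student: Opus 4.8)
The plan is to deal with (i) and (ii) separately, in both cases leaning on Corollary \ref{p} (countable $\alpha$-compactness $\iff$ every decreasing sequence of nonempty $\alpha$-closed sets has nonempty intersection) together with the preliminary observation that, for an $\alpha$-continuous $f$, the preimage $f^{-1}(C)$ of every closed $C\subseteq Y$ is $\alpha$-closed in $X$. I would record this observation first: the complement of an $\alpha$-open set is $\alpha$-closed (immediate from the definitions by taking complements, since $X\setminus A\supseteq Cl(Int(Cl(X\setminus A)))$ whenever $A\subseteq Int(Cl(Int(A)))$), and $f^{-1}(C)=X\setminus f^{-1}(Y\setminus C)$ with $f^{-1}(Y\setminus C)$ $\alpha$-open by $\alpha$-continuity.

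For (i) I would argue by contradiction, reading ``bounded'' in the usual metric sense (take $Y=\bbR$, or more generally a metric space with a fixed base point $y_0$). If $f$ is unbounded, set, for each $n\in\bbN$,
\[
F_n=f^{-1}\bigl(\{\,y\in Y:\ d(y,y_0)\ge n\,\}\bigr).
\]
Since $\{\,y:\ d(y,y_0)\ge n\,\}$ is closed in $Y$, each $F_n$ is $\alpha$-closed; unboundedness makes every $F_n$ nonempty; and $F_1\supseteq F_2\supseteq\cdots$ is decreasing. By Corollary \ref{p}, $\bigcap_{n\in\bbN}F_n\neq\emptyset$, and any $x$ in this intersection would satisfy $d(f(x),y_0)\ge n$ for all $n$, which is impossible. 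Hence $f$ is bounded. (If one prefers to read ``bounded'' as ``$f(X)$ is countably $\alpha$-compact,'' the same family $\{F_n\}$ together with Corollary \ref{p} still does the job after restricting attention to $f(X)$.)

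For (ii) I would run the standard ``box a point away from the graph'' argument. Write $G_f=\{(x,f(x)):x\in X\}$ and take $(x_0,y_0)\in(X\times Y)\setminus G_f$, so $y_0\neq f(x_0)$. Appealing to a Hausdorff-type separation of $Y$ — this is the property really used here, so the hypothesis should be understood with such an axiom alongside $\alpha$-compactness, since compactness by itself does not separate the two points — choose disjoint open sets $U\ni y_0$ and $V\ni f(x_0)$. Then $f^{-1}(V)$ is $\alpha$-open, contains $x_0$, and $f^{-1}(V)\times U$ is disjoint from $G_f$ (any $(x,f(x))$ in it would force $f(x)\in U\cap V=\emptyset$). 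The one step needing a short computation is that $f^{-1}(V)\times U$ is $\alpha$-open in $X\times Y$; more generally $A\times B$ is $\alpha$-open whenever $A$ is $\alpha$-open in $X$ and $B$ is open in $Y$. I would get this from the box identities $Int(S\times T)=Int(S)\times Int(T)$ and $Cl(S\times T)=Cl(S)\times Cl(T)$ and monotonicity, via the chase
\[
A\times B\subseteq Int(Cl(Int(A)))\times B\subseteq Int_{X\times Y}\!\bigl(Cl_{X\times Y}(Int_{X\times Y}(A\times B))\bigr).
\]
Since every point of $(X\times Y)\setminus G_f$ then has an $\alpha$-open neighbourhood missing $G_f$, the complement of $G_f$ is $\alpha$-open, i.e. $G_f$ is $\alpha$-closed.

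The genuine obstacle is the product computation in (ii): checking that the class of $\alpha$-open sets is stable under forming boxes with ordinary open sets, and, in the same breath, pinning down exactly which separation property of $Y$ the closed-graph argument invokes. Part (i), by contrast, presents no real difficulty once the decreasing sequence $\{F_n\}$ has been written down and Corollary \ref{p} applied.
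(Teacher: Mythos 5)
Your proposal is correct as far as it goes, but there is nothing in the paper to compare it with: the proposition is stated there without any proof, so your argument fills a genuine gap rather than paralleling an existing one. For (i), your reduction is the natural route: preimages of closed sets are $\alpha$-closed (complementation swaps $Int$ and $Cl$, so complements of $\alpha$-open sets are $\alpha$-closed), the sets $F_n=f^{-1}\{y: d(y,y_0)\ge n\}$ form a decreasing sequence of nonempty $\alpha$-closed sets if $f$ is unbounded, and Corollary \ref{p} yields a point whose image has infinite distance from $y_0$ --- a contradiction. This is exactly how the paper's Corollary \ref{p} is meant to be used, and your reading of ``bounded'' (which the statement leaves undefined for a general $(Y,\sigma)$) as the metric or real-valued case is the only reasonable one. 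For (ii), you correctly identify the flaw in the statement as printed: $\alpha$-compactness of $Y$ by itself gives no way to separate $y_0$ from $f(x_0)$, and the closed-graph argument genuinely requires a Hausdorff-type axiom (e.g.\ $\alpha$-$T_2$ or $T_2$) on $Y$; with that hypothesis added, your box argument works. The computation that $A\times B$ is $\alpha$-open when $A$ is $\alpha$-open and $B$ is open is sound, since $Int(A\times B)=Int(A)\times B$, $Cl(Int(A)\times B)=Cl(Int(A))\times Cl(B)$, and $B\subseteq Int(Cl(B))$, giving $A\times B\subseteq Int(Cl(Int(A\times B)))$; and the final step uses the standard fact that arbitrary unions of $\alpha$-open sets are $\alpha$-open, which is worth stating explicitly since the paper never records it. In short: the proof is essentially right, and its two caveats (the meaning of ``bounded'' in (i), the missing separation hypothesis in (ii)) are defects of the paper's statement, not of your argument.
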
	
	
	\begin{prop}
		For the topological spaces $(X,\tau)$, the following are equivalent:
		\begin{itemize}
			\item[\rm(i)]   Countably $\alpha$-compactness property is invariant under  $\alpha$-continuous functions.

			\item[\rm(ii)] Every countably $\alpha$-compact subspace of the $\bbR$ is $\alpha$-compact.
			
			\item[\rm(iii)] The class of countably $\alpha$-compact spaces is perfect.
		\end{itemize}
		
	\end{prop}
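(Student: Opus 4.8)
The plan is to prove the three conditions equivalent by the cycle $\mathrm{(i)}\Rightarrow\mathrm{(iii)}\Rightarrow\mathrm{(ii)}\Rightarrow\mathrm{(i)}$, drawing on three facts already established in this section: the equivalence ``$\alpha$-compact $\Longleftrightarrow$ countably $\alpha$-compact together with the Lindel\"of property'' recorded in Example~3.2; the reformulation in Corollary~\ref{p} of countable $\alpha$-compactness as the finite intersection property for families of $\alpha$-closed sets (equivalently: every decreasing sequence of non-empty $\alpha$-closed sets has non-empty intersection); and Proposition~3.7, i.e.\ that an $\alpha$-continuous real-valued function on a countably $\alpha$-compact space is bounded and that the graph of an $\alpha$-continuous map into an $\alpha$-compact space is $\alpha$-closed in the product. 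I shall also use the elementary remark that every $\alpha$-compact space is compact, since each open cover is in particular an $\alpha$-open cover; hence an $\alpha$-compact subspace of $\bbR$ is closed and bounded.

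For $\mathrm{(i)}\Rightarrow\mathrm{(iii)}$ I would first make ``perfect'' precise, as: the class of countably $\alpha$-compact spaces is hereditary with respect to $\alpha$-closed subspaces and is carried onto itself by $\alpha$-continuous perfect surjections. The closed-hereditary half is essentially the closing remark of the proof of Theorem~3.3 combined with Corollary~\ref{p}: a decreasing sequence of non-empty $\alpha$-closed subsets of an $\alpha$-closed subspace $A$ of a countably $\alpha$-compact space is again such a sequence in the ambient space, so it has non-empty intersection, which necessarily lies in $A$. The surjective-image half is a special case of hypothesis $\mathrm{(i)}$, which in fact asserts invariance under \emph{all} $\alpha$-continuous surjections; hence $\mathrm{(iii)}$ holds. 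For $\mathrm{(iii)}\Rightarrow\mathrm{(ii)}$, let $Z\subseteq\bbR$ be countably $\alpha$-compact. The inclusion $Z\hookrightarrow\bbR$ is $\alpha$-continuous, so Proposition~3.7(i) forces $Z$ to be bounded, hence contained in a compact interval $I$, which is compact, so $\alpha$-compact, so Lindel\"of. Since the class of countably $\alpha$-compact spaces is perfect, the subspace $Z$ of the Lindel\"of space $I$ inherits the Lindel\"of property; thus $Z$ is simultaneously countably $\alpha$-compact and Lindel\"of, hence $\alpha$-compact by Example~3.2.

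For $\mathrm{(ii)}\Rightarrow\mathrm{(i)}$, take an $\alpha$-continuous $f\colon X\to Y$ with $X$ countably $\alpha$-compact, and suppose for contradiction that $f(X)$ is not countably $\alpha$-compact. By Corollary~\ref{p} fix a decreasing sequence $G_1\supseteq G_2\supseteq\cdots$ of non-empty $\alpha$-closed subsets of $f(X)$ with $\bigcap_{n}G_n=\emptyset$, and (shrinking if necessary) with $G_n\setminus G_{n+1}\neq\emptyset$ for infinitely many $n$. I would then construct an $\alpha$-continuous map $g\colon X\to\bbR$ that records how deep into this nest a point is sent by $f$ --- concretely, a step-type function assigning the value $1-2^{-m}$ on the $f$-preimage of $G_m\setminus G_{m+1}$ --- so that $g(X)$ has $1$ as a limit point while $1\notin g(X)$. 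By Proposition~3.7(i) the map $g$ is automatically bounded, and $g(X)$, being the image of a countably $\alpha$-compact space under an $\alpha$-continuous map, is a countably $\alpha$-compact subspace of $\bbR$; so by $\mathrm{(ii)}$ it is $\alpha$-compact, hence compact, hence closed in $\bbR$ --- contradicting that $1$ is a non-isolated point missing from $g(X)$. Therefore $f(X)$ is countably $\alpha$-compact, which is $\mathrm{(i)}$.

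The decisive obstacle is the map $g$ in $\mathrm{(ii)}\Rightarrow\mathrm{(i)}$: one must produce it as a genuine $\alpha$-continuous map on $X$, and one must check that $g(X)$ is countably $\alpha$-compact without circularly appealing to $\mathrm{(i)}$. Both points are delicate because $\alpha$-continuity is not stable under composition and pre-images of $\alpha$-closed sets need not be $\alpha$-closed, so $g$ cannot be taken to be a step function composed with $f$. My proposed remedy is to build $g$ directly on $X$ by passing through the graph $\Gamma_f\subseteq X\times Y$, which Proposition~3.7(ii) exhibits as $\alpha$-closed once the codomain is $\alpha$-compact; one therefore first replaces $Y$ by an $\alpha$-compact space still carrying the nest $\{G_n\}$, and then lets $g$ inherit $\alpha$-continuity from the ordinary (not merely $\alpha$-) continuous coordinate projections. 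Making this reduction precise is the real work; once it is done, boundedness comes from Proposition~3.7(i), the countable $\alpha$-compactness of $g(X)$ is a direct check at the level of intervals in $\bbR$, and the closed-range contradiction finishes the argument --- whereas $\mathrm{(i)}\Rightarrow\mathrm{(iii)}$ and $\mathrm{(iii)}\Rightarrow\mathrm{(ii)}$ are comparatively routine once the word ``perfect'' has been unfolded against Theorem~3.3, Corollary~\ref{p} and Example~3.2.
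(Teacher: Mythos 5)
The paper states this proposition without any proof at all, so there is nothing of the authors' to compare your argument against; judged on its own terms, your proposal has genuine gaps. The most concrete one is in (iii)$\Rightarrow$(ii): you assert that a compact interval $I$ is $\alpha$-compact, but compactness does not imply $\alpha$-compactness. Since $\alpha$-open sets are exactly sets of the form (open)$\setminus$(nowhere dense), the countable family $U_0=([0,1]\setminus C)\cup\{0\}$, $V_n=([0,1]\setminus C)\cup\{1/n\}$ with $C=\{0\}\cup\{1/n:n\in\bbN\}$ is an $\alpha$-open cover of $[0,1]$ with no finite subcover, so $[0,1]$ is not $\alpha$-compact (nor countably so in the usual sense). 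In the same step, the appeal to (iii) to conclude that $Z$ is Lindel\"of is a non sequitur: a ``perfect class'' (under any reading of that phrase) says nothing about subspaces of a Lindel\"of space; Lindel\"ofness of $Z\subseteq\bbR$ follows from second countability of $\bbR$ alone. Thus hypothesis (iii) is never genuinely used in your (iii)$\Rightarrow$(ii), and the cyclic structure you announce does not actually establish the equivalence you want unless the other implications are airtight.

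They are not. In (i)$\Rightarrow$(iii) you supply your own definition of ``perfect class'' ($\alpha$-closed-hereditary plus preserved by $\alpha$-continuous perfect surjections), chosen so that (i) implies it almost by fiat; under the standard reading (closure of the class under perfect preimages) your argument proves nothing, and since neither you nor the paper fixes the definition, the implication is not established. More seriously, in (ii)$\Rightarrow$(i) the heart of the argument --- an $\alpha$-continuous real-valued $g$ on $X$ recording how deep into the nest $\{G_n\}$ a point is sent by $f$ --- is left unconstructed, and you yourself identify why the obvious construction fails: $\alpha$-continuity is not preserved under composition, and $f$-preimages of the sets $G_m\setminus G_{m+1}$ need not be $\alpha$-open or $\alpha$-closed. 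The proposed remedy, ``replace $Y$ by an $\alpha$-compact space still carrying the nest,'' is not available: as the interval example above shows, $\alpha$-compactness is drastically more restrictive than compactness, and no embedding of $f(X)$ into an $\alpha$-compact space is exhibited (nor exists in general), so Proposition~3.7(ii) cannot be invoked. As it stands, (ii)$\Rightarrow$(i) is a plan whose decisive step is missing, and the two ``routine'' implications each rest on an unjustified or false auxiliary claim.
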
	
	\begin{prop}
		The Cartesian product of countably $\alpha$-compact space $X$ and $\alpha$-compact space $Y$ is countably $\alpha$-compact. The Cartesian product of two countably $\alpha$-compact spaces needs not to be compact.
	\end{prop}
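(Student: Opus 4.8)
The plan is to transport the classical argument that the product of a countably compact space and a compact space is countably compact into the $\alpha$-setting, using the finite-intersection reformulation in Corollary~\ref{p}. So let $\{F_n:n\in\bbN\}$ be a decreasing sequence of non-empty $\alpha$-closed subsets of $X\times Y$ (since a finite intersection of $\alpha$-closed sets is $\alpha$-closed, one may always pass to partial intersections, so it is enough to treat this case); by Corollary~\ref{p} it suffices to produce a point of $\bigcap_n F_n$. First I would push the data down to the countably $\alpha$-compact factor: the sets $Cl_\alpha(\pi_X(F_n))$ form a decreasing sequence of non-empty $\alpha$-closed subsets of $X$, so Corollary~\ref{p} applied to $X$ yields a point $x_0\in\bigcap_n Cl_\alpha(\pi_X(F_n))$. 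Since $\alpha O(X)$ is a topology (Nj\aa stad), the condition $x_0\in Cl_\alpha(\pi_X(F_n))$ says exactly that every $\alpha$-open neighbourhood $U$ of $x_0$ meets $\pi_X(F_n)$, i.e.\ $(U\times Y)\cap F_n\neq\emptyset$, and hence $\pi_Y((U\times Y)\cap F_n)\neq\emptyset$.

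Next I would use the $\alpha$-compactness of $Y$ on the slice over $x_0$ (recall that $\alpha$-compactness of $Y$ is equivalent to the finite intersection property for arbitrary families of $\alpha$-closed subsets of $Y$). Let $\mathcal{H}$ be the family of all $Cl_\alpha(\pi_Y((U\times Y)\cap F_n))$, where $U$ runs over $\alpha$-open neighbourhoods of $x_0$ and $n$ over $\bbN$; these are $\alpha$-closed subsets of $Y$. Given finitely many pairs $(U_1,n_1),\dots,(U_k,n_k)$, put $U=\bigcap_i U_i$ (again an $\alpha$-open neighbourhood of $x_0$) and $n=\max_i n_i$; then $(U\times Y)\cap F_n\subseteq (U_i\times Y)\cap F_{n_i}$ for each $i$, while $(U\times Y)\cap F_n\neq\emptyset$, so $Cl_\alpha(\pi_Y((U\times Y)\cap F_n))$ is a non-empty subset of every chosen member of $\mathcal{H}$. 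Thus $\mathcal{H}$ has the finite intersection property, and there is a point $y_0\in\bigcap\mathcal{H}$. Finally $(x_0,y_0)$ should lie in each $F_n$: for $\alpha$-open neighbourhoods $U\ni x_0$ and $W\ni y_0$, membership $y_0\in Cl_\alpha(\pi_Y((U\times Y)\cap F_n))$ forces $W$ to meet $\pi_Y((U\times Y)\cap F_n)$, which produces a point of $F_n$ inside $U\times W$; since $F_n$ is $\alpha$-closed this gives $(x_0,y_0)\in F_n$, whence $\bigcap_n F_n\neq\emptyset$ and $X\times Y$ is countably $\alpha$-compact by Corollary~\ref{p}.

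The step I expect to be the genuine obstacle is the concluding one. Passing from the fact that every box $U\times W$ (with $U\in\alpha O(X)$ a neighbourhood of $x_0$ and $W\in\alpha O(Y)$ a neighbourhood of $y_0$) meets $F_n$ to the conclusion $(x_0,y_0)\in Cl_\alpha(F_n)$ requires (a) that $U\in\alpha O(X)$ and $W\in\alpha O(Y)$ imply $U\times W\in\alpha O(X\times Y)$, and, more seriously, (b) that such boxes are cofinal among the $\alpha$-open neighbourhoods of $(x_0,y_0)$, i.e.\ a comparison of $\alpha O(X\times Y)$ with the product of $\alpha O(X)$ and $\alpha O(Y)$. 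Point (a) is routine, since $Int$ and $Cl$ commute with finite products: $U\times W\subseteq Int(Cl(Int(U)))\times Int(Cl(Int(W)))=Int(Cl(Int(U\times W)))$. Point (b) is where real work is needed, and I would handle it either by establishing (or invoking) the corresponding product lemma for $\alpha$-open sets, or by replacing the boxes with the sub-base of tubes $\{U\times Y:U\in\alpha O(X)\}\cup\{X\times W:W\in\alpha O(Y)\}$ and running the argument with those.

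For the last assertion, one example does the job. If $Z$ is any non-empty space that is not compact, then $Z\times Z$ is not compact either, since the projection $\pi_X\colon Z\times Z\to Z$ is a continuous surjection and continuous images of compact spaces are compact; so it suffices to produce a single non-empty $Z$ that is countably $\alpha$-compact but not compact and take $X=Y=Z$. Here caution is needed: by the Lemma, countable compactness is strictly weaker than countable $\alpha$-compactness, so one must keep the $\alpha$-topology under control. The cleanest route is to take $Z$ to be a countably compact, non-compact space in which every nowhere dense subset is closed, for then $\alpha O(Z)=\tau$ and countable $\alpha$-compactness reduces to ordinary countable compactness; exhibiting (or citing the existence of) such a $Z$ is the part of the argument that has to be done with care.
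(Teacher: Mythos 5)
The paper states this proposition without any proof, so there is nothing to compare your argument against on that side; it has to stand alone, and it does not. The step you flagged as (b) is not a repairable technicality but the point where the statement itself collapses (read, as your proof reads it, with the definitions of Section~2 and the characterization in Corollary~\ref{p}). By Nj\aa stad's description of the $\alpha$-topology, $\alpha$-open sets are exactly the sets (open)$\setminus$(nowhere dense), and such a set in $X\times Y$ need not contain any box $U\times W$ with $U\in\alpha O(X)$, $W\in\alpha O(Y)$ around a given point; tubes fare no better, since their finite intersections are boxes. Concretely, let $S=\{0\}\cup\{1/n:n\in\bbN\}$ with its usual topology. Every nowhere dense subset of $S$ lies in $\{0\}$, so every $\alpha$-open set containing $0$ is open, hence cofinite, and $S$ is $\alpha$-compact (the member of a cover containing $0$ already covers all but finitely many points), hence also countably $\alpha$-compact. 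But in $S\times S$ the sets $F_n=\{0\}\times\{1/m:m\ge n\}$ are nowhere dense (they lie in the slice $\{0\}\times S$, which has empty interior), hence $\alpha$-closed; they are non-empty, decreasing, and $\bigcap_{n}F_n=\emptyset$, so by Corollary~\ref{p} the product $S\times S$ is \emph{not} countably $\alpha$-compact. Tracing your argument on this example: $x_0=y_0=0$, every $\alpha$-open box about $(0,0)$ does meet $F_n$, and yet $(0,0)\notin F_n$; indeed $G=(S\times S)\setminus\bigl(\{0\}\times\{1/m:m\ge 1\}\bigr)$ is $\alpha$-open (open minus nowhere dense), contains $(0,0)$, and misses every $F_n$. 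So the first assertion is false as stated, and no version of the box/tube comparison can close the gap.

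The second half is also in trouble along your route. Countable $\alpha$-compactness forces every nowhere dense subset to be finite: if $P=\{p_n:n\in\bbN\}$ is infinite and nowhere dense, then $\{X\setminus P\}\cup\{X\setminus(P\setminus\{p_n\}):n\in\bbN\}$ is a countable $\alpha$-open cover with no finite subcover. This rules out all the standard countably compact non-compact candidates (for instance $[0,\omega_1)$, whose set of limit ordinals is infinite and nowhere dense), and your ``every nowhere dense set is closed'' suggestion hits the same wall, since then every nowhere dense set is closed and discrete, hence finite under countable compactness; it is not clear that a countably $\alpha$-compact non-compact $Z$ exists at all, so taking $X=Y=Z$ is not an available shortcut. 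What the example above does settle is the reading the authors presumably intended: $S$ is (countably) $\alpha$-compact, while $S\times S$, though compact, fails to be countably $\alpha$-compact, so the property is not even finitely productive.
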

	
	Recall from \cite{18} that a topological space $(X,\tau)$ is $\alpha$-regular if for each point $x$ in $X$ and a closed subset $F$ of $X$ that does not contain $x$, there exist two disjoint open sets $U$ and $V$ such that $x$ belongs to $U$ and $F$ is a dense subset of $V$.
	\begin{defn}
		A topological space $(X,\tau)$ is completely $\alpha$-regular if points can be separated from closed sets via $\alpha$-continuous real-valued functions.
	\end{defn}
	\begin{prop}
		A topological space $(X,\tau)$ is Tychonoff if and only if it is completely $\alpha$-regular and $T_0$.
	\end{prop}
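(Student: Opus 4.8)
The plan is to prove the two implications separately: the ``only if'' direction is an immediate observation, while the ``if'' direction is reduced to a single lemma comparing $\alpha$-continuous and genuinely continuous real-valued functions. For ``only if'', suppose $(X,\tau)$ is Tychonoff. Then it is $T_1$, hence $T_0$; and given a point $x$ and a closed set $F$ with $x\notin F$, complete regularity furnishes a continuous $f:X\to\bbR$ with $f(x)=0$ and $f(F)=\{1\}$. Since every open subset of $X$ is $\alpha$-open (Njåstad \cite{1}), $f^{-1}$ sends open sets to $\alpha$-open sets, so $f$ is $\alpha$-continuous; hence $X$ is completely $\alpha$-regular and $T_0$.

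For the ``if'' direction, assume $(X,\tau)$ is completely $\alpha$-regular and $T_0$. The crux is the \emph{Lemma: every completely $\alpha$-regular space is completely regular.} Granting it, the conclusion follows by the classical fact that a completely regular $T_0$ space is $T_1$: given $x\neq y$, $T_0$ yields (after possibly interchanging the points) an open set $U$ with $x\in U$ and $y\notin U$; complete regularity gives a continuous $h:X\to[0,1]$ with $h(x)=0$ and $h\equiv1$ on $X\setminus U$; and $h^{-1}([0,\tfrac12))$ and $h^{-1}((\tfrac12,1])$ are then disjoint open sets separating $x$ from $y$. So $X$ is completely regular and $T_1$, i.e.\ Tychonoff.

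To prove the Lemma I would fix $x\notin F$ (with $F$ closed) together with an $\alpha$-continuous $f:X\to[0,1]$ satisfying $f(x)=0$ and $f\equiv1$ on $F$, and use the structural description of $\alpha$-open sets: a set is $\alpha$-open exactly when it has the form $W\setminus N$ with $W$ open and $N$ nowhere dense, and for such a set $Cl(W\setminus N)=Cl(W)$ while $Int(W\setminus N)$ is dense in $W$. Writing $f^{-1}([0,\tfrac1n))=W_n\setminus N_n$ then yields a descending chain of honest open neighbourhoods $W_n$ of $x$, while $f^{-1}((1-\tfrac1n,1])=W_n'\setminus N_n'$ yields a descending chain of honest open sets $W_n'$ containing $F$, with $W_n\cap W_n'$ nowhere dense for $n\geq 2$. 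The plan is then to manufacture a continuous $g:X\to[0,1]$ with $g(x)=0$ and $g\equiv1$ on $F$ out of these open sets alone --- e.g.\ by a Urysohn-type interpolation along the families $\{Int(\overline{W_n})\}$ and $\{X\setminus\overline{W_n'}\}$, once these are checked to be suitably nested --- so that $g$ witnesses complete regularity.

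I expect the manufacture of $g$ to be the main obstacle: one has to show that the nowhere dense ``defects'' $N_n,N_n'$ can be discarded without destroying the separation, which amounts to arranging a genuine nesting such as $\overline{W_{n+1}}\subseteq W_n$ (or a workable substitute) among the extracted open sets before invoking Urysohn's construction. If this cannot be done in full generality, the natural fallback is to carry out the argument in the topology $\alpha O(X)$ of $\alpha$-open sets, or to add a mild Baire-type hypothesis on $X$; but modulo the Lemma the stated equivalence follows from the routine reasoning above.
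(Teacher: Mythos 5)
The paper states this proposition with no proof at all, so there is nothing of its own to compare against; judged on its merits, your reduction is the right one, but as written there is a genuine gap: the whole ``if'' direction rests on your Lemma that completely $\alpha$-regular implies completely regular, and you do not prove it --- you sketch a strategy, flag the manufacture of the continuous function $g$ as ``the main obstacle,'' and offer fallbacks (working in $\alpha O(X)$, or adding a Baire-type hypothesis) that would establish a different statement. Everything else in your write-up (continuous $\Rightarrow$ $\alpha$-continuous because open sets are $\alpha$-open, and completely regular $+\,T_0\Rightarrow T_1$) is routine, so the unproved Lemma is precisely the content of the proposition.

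The good news is that the Lemma is true and your own raw materials suffice; the nesting you could not arrange can be obtained directly. Fix $x\notin F$ with $F$ closed and an $\alpha$-continuous $f:X\to[0,1]$ with $f(x)=0$ and $f\equiv 1$ on $F$. For each rational $p\in(0,1)$ write $f^{-1}([0,p))=V_p\setminus N_p$ and $f^{-1}((p,1])=V'_p\setminus N'_p$ with $V_p,V'_p$ open and $N_p,N'_p$ nowhere dense (Njåstad's description of $\alpha$-open sets, as in your sketch). Two observations do all the work. First, if $p<p'$ then $f^{-1}([0,p))$ and $f^{-1}((p',1])$ are disjoint, so the open set $V_p\cap V'_{p'}$ is contained in the nowhere dense set $N_p\cup N'_{p'}$ and is therefore empty. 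Second, if $p'<q$ then $f^{-1}([0,q))\cup f^{-1}((p',1])=X$, hence $V_q\cup V'_{p'}=X$. Combining these, for $p<p'<q$ one gets $\overline{V_p}\subseteq X\setminus V'_{p'}\subseteq V_q$; moreover $x\in V_p$ for every $p$, while $F\subseteq V'_{p'}$ gives $F\cap V_p=\emptyset$ for every $p<1$. Now $g(y)=\inf\{p:y\in V_p\}$ (with $g(y)=1$ if $y$ lies in no $V_p$) is continuous by the standard Urysohn-scale argument, $g(x)=0$ and $g\equiv 1$ on $F$, so $X$ is completely regular --- no Baire hypothesis and no passage to the $\alpha$-topology is needed. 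With this inserted in place of your conditional step, your proof is complete.
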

	
	\begin{prop}
		For the topological spaces $(X,\tau)$, the following are equivalent:
		\begin{itemize}
			\item[\rm(i)]   $\alpha$-$T_2$ countably $\alpha$-compact.

			\item[\rm(ii)] The projection function $p: X\times A (\aleph_0)\longrightarrow A(\aleph_0)$ is $\alpha$-closed.
			
			\item[\rm(iii)] Each $\alpha$-closed subspace of a countably $\alpha$-compact space is $\alpha$-compact.
		\end{itemize}
		
	\end{prop}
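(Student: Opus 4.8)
The plan is to prove the cycle $\mathrm{(i)}\Rightarrow\mathrm{(ii)}\Rightarrow\mathrm{(iii)}\Rightarrow\mathrm{(i)}$, in each step adapting the classical Kuratowski--Engelking machinery for closed projections to the $\alpha$-setting. Throughout I would write $A(\aleph_0)=\bbN\cup\{\omega\}$, where each $n\in\bbN$ is isolated and the neighbourhoods of $\omega$ are the cofinite sets containing $\omega$; note every such neighbourhood is open, hence $\alpha$-open, so that the only non-trivial behaviour of $p$ occurs over the point $\omega$.

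For $\mathrm{(i)}\Rightarrow\mathrm{(ii)}$, let $F$ be $\alpha$-closed in $X\times A(\aleph_0)$ and let $y\notin p(F)$. If $y=n$ is isolated, then $\{n\}$ is an open neighbourhood of $y$ with $p^{-1}(\{n\})\cap F=\emptyset$, so $y\notin Cl_\alpha(p(F))$. The substantive case is $y=\omega$. For each $n$ put $F_n=\{x\in X:(x,n)\in F\}$, which is $\alpha$-closed in $X$. If all but finitely many $F_n$ are empty, a cofinite neighbourhood of $\omega$ misses $p(F)$ and we are done; otherwise choose an infinite $M\subseteq\bbN$ and $x_n\in F_n$ for $n\in M$, and consider the decreasing sequence $K_m=Cl_\alpha\big(\bigcup_{n\in M,\,n\ge m}\{x_n\}\big)$ of nonempty $\alpha$-closed subsets of $X$. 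By Corollary \ref{p} (countable $\alpha$-compactness of $X$) there is $x^\ast\in\bigcap_m K_m$, and one checks that $(x^\ast,\omega)\in Cl_\alpha(F)=F$, since every $\alpha$-neighbourhood of $(x^\ast,\omega)$ meets $\bigcup_{n\ge m}(F_n\times\{n\})$ and hence contains some $(x_n,n)\in F$. This contradicts $\omega\notin p(F)$, so $p(F)$ is $\alpha$-closed.

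For $\mathrm{(ii)}\Rightarrow\mathrm{(iii)}$ I would first recover countable $\alpha$-compactness of $X$ from the $\alpha$-closedness of $p$: given a decreasing sequence $\{G_i\}_{i\in\bbN}$ of nonempty $\alpha$-closed subsets of $X$, the set $F=Cl_\alpha\big(\bigcup_i (G_i\times\{i\})\big)$ is $\alpha$-closed, so $p(F)$ is $\alpha$-closed and contains every isolated point $i$, hence contains $\omega$; unwinding this membership and using that the $G_i$ decrease yields a point of $\bigcap_i G_i$, whence $\bigcap_i G_i\neq\emptyset$ and Corollary \ref{p} applies. To upgrade to ``every $\alpha$-closed subspace is $\alpha$-compact'' I would invoke the $\alpha$-$T_2$ hypothesis together with the characterisation recorded in the Example ($\alpha$-compact $=$ countably $\alpha$-compact $+$ Lindel\"of): an $\alpha$-closed subspace $C$ of a countably $\alpha$-compact space is itself countably $\alpha$-compact (its $\alpha$-closed subsets are $\alpha$-closed in the ambient space, so Corollary \ref{p} transfers), and the $\alpha$-$T_2$ property supplies the Lindel\"of half.

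Finally $\mathrm{(iii)}\Rightarrow\mathrm{(i)}$ is essentially immediate, since $X$ is an $\alpha$-closed subspace of itself and therefore $\alpha$-compact, in particular countably $\alpha$-compact, with the $\alpha$-$T_2$ separation carried along through the cycle. The main obstacle I anticipate is structural: the $\alpha$-topology of the product $X\times A(\aleph_0)$ is strictly finer than the product of the $\alpha$-topologies, so the phrase ``$\alpha$-neighbourhood of $(x^\ast,\omega)$'' in the cluster-point step must be handled through the operator $Int_\alpha(Cl(Int_\alpha(\cdot)))$ rather than through products of $\alpha$-open sets, and pinning down exactly where $\alpha$-$T_2$ is genuinely required (as opposed to inherited) is the delicate point of the argument.
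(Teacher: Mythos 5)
The paper states this proposition with no proof at all, so there is nothing to compare your argument against; it must stand on its own, and the step you yourself flag as ``delicate'' is not merely delicate but false, and with it the implication (i)$\Rightarrow$(ii) as literally stated. The point is that every nowhere dense subset of $X\times A(\aleph_0)$ is automatically $\alpha$-closed (its closure has empty interior, so $Cl(Int(Cl(\cdot)))=\emptyset$), and such a set can spread over infinitely many levels $n$ without touching level $\omega$. Concretely, take $X=A(\aleph_0)$ itself: $X$ is $\alpha$-$T_2$ and countably $\alpha$-compact (here the nowhere dense sets are contained in $\{\infty\}$, so the $\alpha$-open sets coincide with the open sets and $X$ is compact), yet $E=\{\infty\}\times\bbN$ is nowhere dense in $X\times A(\aleph_0)$ (its closure is $\{\infty\}\times A(\aleph_0)$, which has empty interior), hence $\alpha$-closed, while $p(E)=\bbN$ is not $\alpha$-closed in $A(\aleph_0)$. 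This also pinpoints where your cluster-point argument breaks: with $F=E$ you get $F_n=\{\infty\}$, $x_n=x^*=\infty$, and indeed $x^*\in\bigcap_m K_m$, but $(x^*,\omega)\notin Cl_\alpha(F)$, because the complement of the nowhere dense set $F$ is itself an $\alpha$-open neighbourhood of $(\infty,\omega)$ missing $F$. In general $x^*\in\bigcap_m Cl_\alpha(S_m)$ in $X$ gives no control over $\alpha$-neighbourhoods of $(x^*,\omega)$ in the product, precisely because the product's $\alpha$-topology allows one to delete nowhere dense sets (such as $\{(x_n,n):n\in M\}$ itself whenever the $x_n$ are non-isolated) that are invisible to the factor $\alpha$-topologies; so the ``one checks'' clause is unprovable, and since (i) does not imply (ii) under the stated reading, no repair of that step exists. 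What does survive is only the classical fact that countable ($\alpha$-)compactness makes the projection carry \emph{closed} sets to closed sets.

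The other legs of your cycle also have problems independent of this. Item (iii) as printed quantifies over all countably $\alpha$-compact spaces and never mentions $X$, so your (iii)$\Rightarrow$(i) is circular: to apply (iii) to ``$X$ as an $\alpha$-closed subspace of itself'' you must already know $X$ is countably $\alpha$-compact, which is what is to be proved, and neither (ii) nor (iii) can possibly yield the $\alpha$-$T_2$ half of (i); ``carrying the separation along the cycle'' is not a legitimate move, since each implication in a cyclic proof must be derived from its own hypothesis alone. Likewise, in (ii)$\Rightarrow$(iii) the claim that $\alpha$-$T_2$ ``supplies the Lindel\"of half'' is unsupported. The real defect lies in the proposition itself, which the paper asserts without proof and whose items are not equivalent as stated; the productive course is to repair the statement (for instance, prove that for countably $\alpha$-compact $X$ the projection maps closed subsets of $X\times A(\aleph_0)$ onto closed sets, or formulate (ii) with respect to the product of the $\alpha$-topologies) rather than to patch the cluster-point step.
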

	\begin{prop}
		If $(X,\tau)$ is a countably $\alpha$ compact space, $(Y,\sigma)$ is an $\alpha-T_1$ space and $f:X\longrightarrow Y$ be $\alpha$-continuous function, then we have $f(\displaystyle \bigcap_{i\in\bbN}F_i)=\bigcap_{i\in\bbN}f(F_i)$ for any decreasing sequence of $\alpha$-closed subsets $F_1\supseteq F_2\supseteq\cdots$, of $X$. 
	\end{prop}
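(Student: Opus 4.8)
The plan is to establish the two inclusions separately. The inclusion $f\bigl(\bigcap_{i\in\bbN}F_i\bigr)\subseteq\bigcap_{i\in\bbN}f(F_i)$ holds for an arbitrary map between arbitrary sets, by monotonicity of images; so no hypothesis is used here, and all the work goes into the reverse inclusion.

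For the reverse inclusion I would fix $y\in\bigcap_{i\in\bbN}f(F_i)$ and try to produce a single point $x\in\bigcap_{i\in\bbN}F_i$ with $f(x)=y$. The natural device is the ``decreasing sequence'' form of Corollary~\ref{p}: set $G_i:=F_i\cap f^{-1}(y)$ for $i\in\bbN$. Three points need checking: (a) each $G_i$ is $\alpha$-closed in $X$; (b) $G_1\supseteq G_2\supseteq\cdots$; (c) $G_i\neq\emptyset$ for every $i$. Of these, (b) is immediate from $F_1\supseteq F_2\supseteq\cdots$, and (c) holds because $y\in f(F_i)$ means some point of $F_i$ is mapped to $y$; moreover (b) and (c) together say that the family $\{G_i\}$ has the finite intersection property. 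Granting (a), Corollary~\ref{p}, applied in the countably $\alpha$-compact space $X$, then gives $\bigcap_{i\in\bbN}G_i\neq\emptyset$; any $x$ in this set lies in every $F_i$, hence in $\bigcap_{i\in\bbN}F_i$, and satisfies $f(x)=y$, so $y\in f\bigl(\bigcap_{i\in\bbN}F_i\bigr)$. This yields $\bigcap_{i\in\bbN}f(F_i)\subseteq f\bigl(\bigcap_{i\in\bbN}F_i\bigr)$ and finishes the proof.

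The only place where the hypotheses on $Y$ and on $f$ enter is step (a), and this is the step I expect to require the most care. First, since $Y$ is $\alpha$-$T_1$, every singleton $\{y\}\subseteq Y$ is $\alpha$-closed: its complement is the union of the $\alpha$-open sets separating the various points $z\neq y$ from $y$, and an arbitrary union of $\alpha$-open sets is again $\alpha$-open. Hence $Y\setminus\{y\}$ is $\alpha$-open, and pulling it back along $f$ shows that $f^{-1}(y)$ is $\alpha$-closed in $X$ — this uses that $\alpha$-continuity, as the notion is handled in this paper, carries $\alpha$-open preimages to $\alpha$-open sets; if one prefers to invoke only the pullback of genuine open sets, one notes that an $\alpha$-$T_1$ condition strong enough to make $\{y\}$ closed would make $f^{-1}(y)$ $\alpha$-closed at once. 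Finally $G_i=F_i\cap f^{-1}(y)$ is $\alpha$-closed as the intersection of two $\alpha$-closed sets, $\alpha$-closed sets being stable under intersection because the $\alpha$-open sets form a topology. With (a) in hand the reduction of the previous paragraph goes through, and the equality $f\bigl(\bigcap_{i\in\bbN}F_i\bigr)=\bigcap_{i\in\bbN}f(F_i)$ follows.
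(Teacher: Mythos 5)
The paper states this proposition without giving any proof, so there is nothing to compare you against; judged on its own, your argument is the natural one — it is exactly the $\alpha$-analogue of the classical proof for countably compact spaces (fix $y$ in $\bigcap_{i}f(F_i)$, intersect the $F_i$ with the fibre $f^{-1}(y)$, and apply the decreasing-sequence form of Corollary~\ref{p}), and the structure is sound: the inclusions (b), (c), the finite intersection property, and the stability of $\alpha$-closed sets under finite intersection and of $\alpha$-open sets under arbitrary union (Nj\aa stad's result that the $\alpha$-open sets form a topology) are all used correctly.

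The one genuine sticking point is the step you yourself flag, namely (a). With the paper's literal definition, $\alpha$-continuity of $f$ only guarantees that preimages of \emph{open} subsets of $Y$ are $\alpha$-open in $X$, while $\alpha$-$T_1$ only makes $Y\setminus\{y\}$ $\alpha$-open, not open; the preimage of an $\alpha$-open set under an $\alpha$-continuous map need not be $\alpha$-open, so ``pulling back $Y\setminus\{y\}$'' is not licensed by the stated hypotheses. To close this one needs either to strengthen the hypothesis on $Y$ to $T_1$ (closed singletons), or to assume $f$ is $\alpha$-irresolute (preimages of $\alpha$-open sets are $\alpha$-open), or to read both ``$\alpha$-$T_1$'' and ``$\alpha$-continuous'' consistently with respect to the $\alpha$-topologies $\tau^{\alpha}$ and $\sigma^{\alpha}$, in which case your argument goes through verbatim. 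This mismatch originates in the paper's definitions (and is presumably present in whatever proof the authors had in mind), but since your write-up rests on it, you should state explicitly which of these readings you adopt rather than leave it as a hedge; with that fixed, the proof is complete.
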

	\begin{example}
		The space $W$ that is obtained from the space of all cardinal numbers less than $\omega_1$ by making the set of all countable limits numbers $Z$ closed is non $\alpha$-regular countably $\alpha$-compact.
	\end{example}
	\begin{prop}
		For a countably $\alpha$-compact $(X,\tau)$, $|X|<e^{\chi(X)}$.
	\end{prop}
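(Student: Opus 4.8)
The plan is to run the closing-off (elementary submodel) argument that underlies the classical Arhangelskii--Shapirovskii type cardinality inequalities, but carried out in the $\alpha$-open category, with countable $\alpha$-compactness --- in the form of Corollary~\ref{p} --- supplying the combinatorial input that Lindel\"ofness supplies classically. Write $\kappa=\chi(X)$ (so $\kappa\ge\aleph_0$) and read $e^{\kappa}$ as the cardinal $2^{\kappa}$, so that the goal becomes $|X|\le 2^{\kappa}$ (we read the strict $<$ as $\le$). The only separation hypothesis needed is an $\alpha$-$T_1$ axiom, available here from $\alpha$-Hausdorffness or from Tychonoffness via the earlier proposition, and used only so that singletons are $G_\kappa$'s in the $\alpha$-topology.

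First I would fix, for each $x\in X$, a family $\mathcal{V}_x\subseteq\alpha O(X)$ with $|\mathcal{V}_x|\le\kappa$ forming an $\alpha$-neighbourhood base at $x$; then $\{x\}=\bigcap_{V\in\mathcal{V}_x}V$, and $\{X\setminus V:V\in\mathcal{V}_x,\ x\in X\}$ is a point-separating family of $\alpha$-closed sets of which every $\alpha$-closed set is an intersection. Next I would build an increasing continuous chain $\langle A_\xi:\xi<\kappa^{+}\rangle$ of subsets of $X$, each of size $\le 2^{\kappa}$ (unions at limit stages), so that at every successor step: (a) $A_{\xi+1}$ is $\alpha$-closed --- which needs the $\alpha$-tightness estimate $|Cl_\alpha(B)|\le|B|^{\kappa}$ for $|B|\le 2^{\kappa}$, the $\alpha$-analogue of $t(X)\le\chi(X)\cdot L(X)$; and (b) for every countable subfamily $\mathcal{F}$ of $\{X\setminus V:V\in\mathcal{V}_a,\ a\in A_\xi\}$ with the finite intersection property, some point of $\bigcap\mathcal{F}$ is placed into $A_{\xi+1}$. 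Clause (b) is affordable because $A_\xi$ has only $(2^{\kappa})^{\aleph_0}=2^{\kappa}$ countable subfamilies of that form, and each relevant $\bigcap\mathcal{F}$ is nonempty precisely by Corollary~\ref{p}; routine cardinal arithmetic keeps $|A_\xi|\le 2^{\kappa}$ throughout.

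Finally I would set $A=\bigcup_{\xi<\kappa^{+}}A_\xi$; then $|A|\le 2^{\kappa}$, and since $\mathrm{cf}(\kappa^{+})>\kappa$ the $\alpha$-tightness bound of (a) makes $A$ itself $\alpha$-closed. It remains to see $A=X$. If some $y\in X\setminus A$, pick for each $a\in A$ a set $V_a\in\mathcal{V}_a$ with $y\notin V_a$; I would then extract a countable $S\subseteq A$ --- automatically contained in a single $A_\xi$, since $\mathrm{cf}(\kappa^{+})>\aleph_0$ --- such that $\{V_a:a\in S\}$ already covers $A$. Once such $S$ is found, $\bigcap_{a\in S}(X\setminus V_a)$ is a countable $\alpha$-closed family with the finite intersection property (it contains $y$), so by clause (b) its intersection meets $A_{\xi+1}\subseteq A$, contradicting that $\{V_a:a\in S\}$ covers $A$. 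Hence $A=X$ and $|X|=|A|\le 2^{\kappa}=e^{\chi(X)}$.

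The step I expect to be the real obstacle is producing that countable $S$. In the classical argument one merely invokes that a closed subspace of a Lindel\"of space is Lindel\"of to obtain a countable subcover; but countable $\alpha$-compactness is strictly weaker than the Lindel\"of property, so instead $S$ must be built recursively, exploiting at each $\omega$-th stage that an infinite set has an $\alpha$-accumulation point --- equivalently, the decreasing-sequence form of Corollary~\ref{p}, or the ``every locally finite family of non-empty subsets is finite'' theorem proved above. Getting this recursion to close off correctly, and checking that $\alpha$-$T_2$ / $\alpha$-regularity is enough to keep the chosen $V_a$'s away from $y$, is where essentially all the difficulty lies; when $X$ is in addition $\alpha$-compact --- that is, countably $\alpha$-compact and Lindel\"of, by the earlier proposition --- the countable subcover is immediate and the argument collapses to the classical one.
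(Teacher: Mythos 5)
The paper states this proposition with no proof at all (it sits bare between an example and the corollary on first countable spaces), so there is nothing to compare your argument against; it has to stand on its own, and it does not. The gap you yourself flag --- producing a countable $S\subseteq A$ with $\{V_a:a\in S\}$ covering $A$ --- is not a deferred technicality but the entire theorem. Countable $\alpha$-compactness, in every form available in this paper (finite subcovers of countable covers, Corollary~\ref{p}, the locally finite family criterion), is an \emph{anti-cover} property: it says something only about families that are already countable. At the step in question the cover $\{V_a:a\in A\}$ has size up to $2^{\kappa}$, and refining a cover of the $\alpha$-closed set $A$ to a countable (or even $\le\kappa$-sized) subcover is precisely a Lindel\"of-type hypothesis --- exactly what the classical Arhangelskii closing-off argument consumes through $L(X)\le\kappa$ and closed-hereditariness of $L$, and exactly what is missing here. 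Your proposed substitute, a recursion using $\alpha$-accumulation points of infinite sets (the decreasing-sequence form of Corollary~\ref{p}), controls countable subsets, not covers of size $2^{\kappa}$; and since Corollary~\ref{p} returns nonempty intersections only for countable families, clause (b) of your closing-off can never see a cover of $A$ that genuinely requires uncountably many of the $V_a$'s. Without importing an extra covering hypothesis (Lindel\"ofness --- which by the paper's own remark would make $X$ $\alpha$-compact and trivialize the situation --- or a bound such as $L(X)\le\chi(X)$), the recursion does not close, and none of the results established earlier in the paper can close it.

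Two smaller mismatches. The proposition assumes only countable $\alpha$-compactness, while your argument needs an $\alpha$-$T_1$/$\alpha$-$T_2$ axiom both to choose $V_a$ avoiding $y$ and to justify the closure estimate $|Cl_\alpha(B)|\le |B|^{\kappa}$ (which, incidentally, requires only $t\le\chi$ together with a Hausdorff-type pseudocharacter argument, not any Lindel\"of input, so your parenthetical appeal to $t(X)\le\chi(X)\cdot L(X)$ is off target but harmless). And even if completed, your argument would give $|X|\le 2^{\chi(X)}$, whereas the statement claims the strict inequality $|X|<e^{\chi(X)}$, which is what the subsequent corollary (cardinality less than $\mathfrak{c}$ for first countable spaces) actually uses; strictness cannot emerge from a closing-off argument of this shape, so even the target of your proof differs from the stated one.
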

	\begin{cor}
		Each first countable countably $\alpha$-compact space has cardinality less than $\mathfrak{c}$.
	\end{cor}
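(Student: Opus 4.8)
The plan is to deduce this directly from the preceding Proposition, whose estimate $|X|<e^{\chi(X)}$ already carries all the content; the Corollary is merely its specialization to the first countable case. First I would recall the relevant cardinal function: a space is first countable precisely when every point admits a countable local base, so its character satisfies $\chi(X)\le\aleph_0$. If $X$ is finite there is nothing to prove, so we may assume $X$ is infinite and hence $\chi(X)=\aleph_0$.

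Next I would invoke the hypothesis that $(X,\tau)$ is countably $\alpha$-compact, which is exactly what licenses the application of the previous Proposition; it yields $|X|<e^{\chi(X)}$. Substituting $\chi(X)=\aleph_0$ and reading the exponential as the cardinal power, we have $e^{\aleph_0}=2^{\aleph_0}=\mathfrak{c}$, so $|X|<\mathfrak{c}$, which is the assertion. One may also phrase the finite case as being subsumed, since $|X|$ finite trivially satisfies $|X|<\mathfrak{c}$.

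The only place that calls for a word of care is the interpretation and monotonicity of the cardinal exponential together with the convention $e^{\aleph_0}=2^{\aleph_0}=\mathfrak{c}$; once that notational point from the Proposition is fixed, the implication is immediate. I do not anticipate any genuine obstacle here, since the Corollary inherits its proof entirely from the Proposition and adds only the elementary observation that first countability forces $\chi(X)\le\aleph_0$.
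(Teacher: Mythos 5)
Your proposal is correct and follows exactly the route the paper intends: the corollary is stated without proof as an immediate specialization of the preceding Proposition, and your argument (first countability gives $\chi(X)=\aleph_0$, then $|X|<e^{\aleph_0}=2^{\aleph_0}=\mathfrak{c}$) is precisely that specialization, with the finite case handled as a harmless aside.
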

	\begin{prop}
		Each countably $\alpha$-compact subspace of Sorgenfrey $\bbR_\ell$ line is countable.
	\end{prop}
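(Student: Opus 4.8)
\medskip
\noindent\emph{Proof strategy.}
The plan is to prove the contrapositive: if a subspace $A\subseteq\bbR_\ell$ is uncountable, then $A$ is not countably $\alpha$-compact. Since every open set is $\alpha$-open (immediately, as $U=\mathrm{Int}(U)\subseteq\mathrm{Int}(\mathrm{Cl}(\mathrm{Int}(U)))$), every closed set is $\alpha$-closed; hence, by Corollary \ref{p}, it is enough to exhibit inside $A$ a decreasing sequence $F_{1}\supseteq F_{2}\supseteq\cdots$ of nonempty closed (hence $\alpha$-closed) subsets with $\bigcap_{k}F_{k}=\emptyset$. The whole difficulty is to locate a sequence of points of $A$ that is closed and discrete \emph{in the Sorgenfrey subspace topology}; its tails will serve as the sets $F_{k}$.

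First I would carry out the bookkeeping on the real line. Regard $A$ as a subset of $\bbR$ with its usual topology. Since $\bbR$ is second countable, the standard condensation argument shows that the set $C$ of points of $A$ that are condensation points of $A$ is co-countable in $A$, hence uncountable. Split $C=C_{1}\cup C_{2}$ according to whether $A$ accumulates at the point from the left: put $C_{1}=\{p\in C:\ (p-\varepsilon,p)\cap A\neq\emptyset\text{ for all }\varepsilon>0\}$ and $C_{2}=C\setminus C_{1}$. For each $p\in C_{2}$ choose $\varepsilon_{p}>0$ with $(p-\varepsilon_{p},p)\cap A=\emptyset$ and a rational $q_{p}\in(p-\varepsilon_{p},p)$; the map $p\mapsto q_{p}$ is injective, because if $p<p'$ both lay in $C_{2}$ with $q_{p}=q_{p'}$, then $p\in(q_{p'},p')\cap A$, which is empty. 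So $C_{2}$ is countable and $C_{1}$ is uncountable, in particular nonempty. Fix $p\in C_{1}$ and pick a strictly increasing sequence $x_{1}<x_{2}<\cdots$ in $A$ with $x_{n}<p$ and $x_{n}\to p$.

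Next I would check that $D=\{x_{n}:n\in\bbN\}$, and every tail $F_{k}=\{x_{n}:n\ge k\}$, is closed and discrete in $\bbR_\ell$, hence in $A$. For a basic neighbourhood $[q,q+\delta)$ of a point $q\in\bbR$: if $q\ge p$ it contains no $x_{n}$; if $q<p$, then for $\delta<p-q$ it contains only finitely many $x_{n}$, since $x_{n}\nearrow p$. Thus no point of $\bbR_\ell$ is an accumulation point of $F_{k}$, each $x_{n}$ is isolated in $D$, and each $F_{k}$ is closed in $\bbR_\ell$; intersecting with $A$ shows each $F_{k}$ is closed, hence $\alpha$-closed, in $A$. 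The $F_{k}$ are nonempty and decreasing with $\bigcap_{k}F_{k}=\emptyset$, so Corollary \ref{p} gives that $A$ is not countably $\alpha$-compact. (Equivalently, $\{A\setminus D\}\cup\{U_{n}:n\in\bbN\}$, with $U_{n}$ open in $A$ and $U_{n}\cap D=\{x_{n}\}$, is a countable $\alpha$-open cover of $A$ with no finite subcover.)

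The step I expect to be the main obstacle is exactly the one that forces a \emph{left}-approaching sequence: a sequence of points of $A$ converging to $p$ from the right does accumulate at $p$ in $\bbR_\ell$ and so is useless, and it is the condensation dichotomy above that guarantees uncountably many points of $A$ are approached from the left, allowing a left-approaching — and therefore Sorgenfrey-closed-discrete — sequence to be extracted. The two routine ingredients to record along the way are that all but countably many points of an uncountable subset of a second-countable space are condensation points of that set, and that the $\alpha$-open sets contain all open sets (so the $\alpha$-closed sets contain all closed sets).
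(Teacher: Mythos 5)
Your proof is correct, and it takes a genuinely different route from the paper's. The paper disposes of the proposition in two lines: it takes a continuous injection $f:X\to\bbR_\ell$ of a countably $\alpha$-compact space and asserts that countability "follows immediately by noting that $f$ is a homeomorphism" --- an argument-by-assertion that never engages with the structure of the Sorgenfrey line. You instead prove the contrapositive directly inside $\bbR_\ell$: the rational-gap injection shows only countably many points of an uncountable $A$ fail to be approached from the left by $A$, so you can extract a strictly increasing sequence $x_n\nearrow p$ of points of $A$; such a sequence is closed and discrete in $\bbR_\ell$ (this is exactly where the left-versus-right asymmetry of the Sorgenfrey topology is used), so its tails $F_k$ form a decreasing sequence of nonempty closed, hence $\alpha$-closed, subsets of $A$ with empty intersection, and Corollary \ref{p} rules out countable $\alpha$-compactness of $A$. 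This buys a self-contained, verifiable argument resting only on Corollary \ref{p} and the elementary fact that open sets are $\alpha$-open, whereas the paper's route hides the entire content in the unjustified homeomorphism claim. One small streamlining: the condensation-point step is not needed --- apply your $p\mapsto q_p$ injection to all points of $A$ with a left gap, conclude that all but countably many points of $A$ are left-approached by $A$, and pick any one of them; the rest of your argument is unchanged.
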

	\begin{proof}
		Assume that $(X,\tau)$ is a countably $\alpha$-compact space and that $f:X \longrightarrow \bbR_\ell$ is a continuous injective function. The countablity of $X$ follows immediately by noting that  $f$ is a homeomorphism.
	\end{proof}
	\begin{thm}
		If $K$ is a subspace of the topological space $(X,\tau)$ and $(Y,\sigma)$ is an $\alpha$-compact space, then the $\alpha$-continuous function $g:X\longrightarrow Y$ has an extension $G$ over X if and only if for each two $\alpha$-closed subsets $F_1$ and $F_2$ of $Y$ , $g^{-1} (F_1 )\cap g^{-1} (F_2 )=\emptyset$.
	\end{thm}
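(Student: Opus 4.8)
The statement is the $\alpha$-version of Taimanov's extension theorem, and I would prove it by adapting the classical argument. First I would fix the reading that makes it correct: $K$ is a \emph{dense} subspace of $X$, $g$ is defined on $K$ and the sought extension $G\colon X\to Y$ satisfies $G|_K=g$, $Y$ is $\alpha$-compact and also $\alpha$-$T_2$ (as in Taimanov's theorem, where $Y$ is compact Hausdorff, some Hausdorff-type separation of $Y$ is unavoidable), the $F_i$ are taken \emph{disjoint}, and the criterion reads that the closures of $g^{-1}(F_1)$ and $g^{-1}(F_2)$ in $X$ are disjoint. The equivalence then splits into a routine necessity and a substantive sufficiency.

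For necessity, suppose $G\colon X\to Y$ is an $\alpha$-continuous extension of $g$ and let $F_1,F_2\subseteq Y$ be disjoint and $\alpha$-closed. Then $G^{-1}(F_1)$ and $G^{-1}(F_2)$ are disjoint subsets of $X$, and since $g^{-1}(F_i)=G^{-1}(F_i)\cap K\subseteq G^{-1}(F_i)$, passing to closures in $X$ gives $\overline{g^{-1}(F_1)}\cap\overline{g^{-1}(F_2)}\subseteq \overline{G^{-1}(F_1)}\cap\overline{G^{-1}(F_2)}$, which is empty because $\alpha$-continuity confines $\overline{G^{-1}(F_i)}$ to $G^{-1}(F_i)$. (Keeping the ``closed/$\alpha$-closed/closure/$\alpha$-closure'' bookkeeping consistent here is exactly the point at which the statement has to be pinned down carefully.)

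For sufficiency I would build $G$ pointwise. Fix $x\in X$; by density of $K$, $U\cap K\neq\emptyset$ for every $\alpha$-open neighbourhood $U$ of $x$, and I set
\[
\mathcal A_x=\bigl\{\,Cl_\alpha\bigl(g(U\cap K)\bigr)\ :\ U\ \alpha\text{-open in }X,\ x\in U\,\bigr\}.
\]
Since the sets $U\cap K$ form a filter base, $\mathcal A_x$ is a family of $\alpha$-closed subsets of $Y$ with the finite intersection property, so $\alpha$-compactness of $Y$ (the full-strength analogue of Corollary~\ref{p}) yields $\bigcap\mathcal A_x\neq\emptyset$. The heart of the matter is that $\bigcap\mathcal A_x$ is a single point. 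If distinct $y_1,y_2$ both lay in it, I would separate them by disjoint $\alpha$-closed sets $F_1,F_2$ with $y_i\in Int_\alpha(F_i)$ — a normality/regularity-type separation that $\alpha$-compact $\alpha$-$T_2$ spaces should supply; then for every $\alpha$-open $U\ni x$ the set $g(U\cap K)$ meets the $\alpha$-open set $Int_\alpha(F_i)$, so $U$ meets $g^{-1}(F_i)$, and therefore $x$ lies in the closure of $g^{-1}(F_1)$ and in that of $g^{-1}(F_2)$, contradicting the criterion. I then define $G(x)$ to be the unique point of $\bigcap\mathcal A_x$; since $g(x)\in\bigcap\mathcal A_x$ whenever $x\in K$, single-valuedness forces $G(x)=g(x)$ there, so $G$ extends $g$.

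Finally I would check that $G$ is $\alpha$-continuous. Given an open $V\subseteq Y$ and $x\in G^{-1}(V)$, the family $\{\,Cl_\alpha(g(U\cap K))\cap(Y\setminus V):U\ \alpha\text{-open},\ x\in U\,\}$ consists of $\alpha$-closed subsets of $Y$ with empty intersection (its intersection is $\{G(x)\}\cap(Y\setminus V)=\emptyset$), so by $\alpha$-compactness some finite subfamily already has empty intersection; intersecting the corresponding $\alpha$-open sets produces an $\alpha$-open neighbourhood $U_0$ of $x$ with $Cl_\alpha(g(U_0\cap K))\subseteq V$, whence $G(x')\in Cl_\alpha(g(U_0\cap K))\subseteq V$ for every $x'\in U_0$ and $U_0\subseteq G^{-1}(V)$. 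As $\alpha$-open sets form a topology, $G^{-1}(V)$ is $\alpha$-open, so $G$ is the desired $\alpha$-continuous extension. The main obstacle is the single-valuedness step: it is the one place the hypothesis is consumed, and it depends on separating two points of $Y$ by disjoint $\alpha$-closed neighbourhoods — which is why the precise statement should list $Y$ as $\alpha$-$T_2$ (and rely on the $\alpha$-normality/$\alpha$-regularity that $\alpha$-compact $\alpha$-Hausdorff spaces enjoy), mirroring the compact-Hausdorff hypothesis of the classical Taimanov theorem.
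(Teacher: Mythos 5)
Your proposal is correct and follows essentially the same route as the paper's own proof: the paper also establishes necessity by noting $\overline{g^{-1}(F_1)}\cap\overline{g^{-1}(F_2)}\subseteq G^{-1}(F_1)\cap G^{-1}(F_2)=\emptyset$, and for sufficiency defines $G(x)$ via the family $\{\overline{g(K\cap U)}: U\in\beta(x)\}$, using the finite intersection property and compactness for nonemptiness, separation of two candidate points by disjoint closed neighbourhoods for uniqueness, and a finite subfamily argument for $\alpha$-continuity. Your explicit repairs of the statement (density of $K$, $g$ defined on $K$, disjoint $F_i$, the closure form of the criterion, and an $\alpha$-$T_2$-type hypothesis on $Y$) are exactly the hypotheses the paper's proof tacitly uses without stating.
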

	\begin{proof}
		Suppose that $G$ is an extension of the function $f$. Now $\overline{g^{-1}(F_1)}\cap\overline{g^{-1}(F_2)}\subseteq G^{-1}(F_1)\cap G^{-1}(F_2)=\emptyset$. On the other hand, if $\beta(x)$ is the set of neighborhoods of the point $x$ in $X$ and $\widetilde{U(x)}=\left\{\overline{g(K\cap U)}:  U\in \beta(x)\right\}$. Then  
		$\overline{g(K\cap U_1\cap U_2\cap \cdots U_n)}\subseteq\overline{g(K\cap U_1)}\cap \overline{g(K\cap U_2)}\cap \cdots \overline{g(K\cap U_n)}$ and $\beta(x)$ has the finite intersection property and $\cap\widetilde{U(x)}\neq\emptyset$ for all $x$. Let $y_1$ and $y_2$ be two distinct points in $\cap\widetilde{U(x)}$, then there exist two disjoint neighborhoods $V_1$ and $V_2$ of $y_1$ and $y_2$ respectively such that $\overline{V_1}\cap\overline{V_2}=\emptyset$, then $\overline{g^{-1}(V_1)}\cap\overline{g^{-1}(V_2)}=\emptyset$. Let $x\in X-\overline{g^{-1}(V_1)}$ and $y\in\widetilde{U(x)}\subseteq\overline{g(K-\overline{g^{-1}(V_1)})}$. Since $V_1\cap g(K-\overline{g^{-1}(V_1)})$ is empty and $y\in X-\overline{g(K-\overline{g^{-1}(V_1)})}$ which is a contradiction. Now $x\in\cap\widetilde{U(x)}$, the claim that the extension $G: X\longrightarrow Y$ is an $\alpha$-continuous. $G(x)=\cap\overline{g(K\cap U)}\subseteq V$, where $V$ is an $\alpha$-open subset of $Y$, hence there exists a family $\left\{U_1,U_2,\cdots U_n\right\}$ in $\beta(x)$ such that, $\displaystyle\bigcap_{i=1}^{n}\overline{g((K\cap U_i)}\subseteq V$, so $G(U)\subseteq V$.
	\end{proof}
	\begin{thm}
		The countably $\alpha$-compact space $(X,\tau)$ with weight greater than or equal to $\aleph_0$ is the continuous image of a closed subspace of the cube $D^{\aleph_0}$.
	\end{thm}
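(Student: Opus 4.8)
The plan is to adapt the classical theorem of Alexandroff and Hausdorff, that a compact metrizable space is a continuous image of a closed subspace of the Cantor cube $D^{\aleph_0}$, to the present setting, with three substitutions: the countable weight supplies a countable family of $\alpha$-open sets in place of a metric; countable $\alpha$-compactness, used in the equivalent form of Corollary~\ref{p}, replaces compactness whenever a nested family of $\alpha$-closed sets must be shown to have non-empty intersection; and the remark after the first Example (countably $\alpha$-compact $+$ Lindel\"of $=$ $\alpha$-compact) is available to pass from countable $\alpha$-open covers to finite ones. Throughout we use that $\alpha O(X)$ is a topology, so that $\alpha C(X)$ is stable under finite unions and arbitrary intersections.

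First I would fix a countable $\alpha$-base $\{B_k : k\in\bbN\}$ of $(X,\tau)$, available by the weight hypothesis, and build inductively a Cantor scheme $\{F_s : s\in D^{<\omega}\}$ of non-empty $\alpha$-closed subsets of $X$ with $F_{\langle\rangle}=X$ and $F_{s0}\cup F_{s1}=F_s$ for every finite binary string $s$, where $s0,s1$ are the two one-step extensions of $s$. At the $k$-th stage each set $F_s$ of the current level is split into $F_s\cap Cl_\alpha(B_k)$ and $F_s\setminus B_k$ --- both $\alpha$-closed, and their union is $F_s$ since $Cl_\alpha(B_k)\cup(X\setminus B_k)=X$ --- keeping both pieces non-empty when possible and repeating a piece otherwise, so that every $F_s$ remains non-empty while the $\alpha$-base drives the branch intersections $\bigcap_n F_{\xi\restriction n}$, $\xi\in D^{\bbN}$, down to single points.

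Then I would set $Z=D^{\bbN}$ and define $f\colon Z\to X$ by letting $f(\xi)$ be the unique point of $\bigcap_n F_{\xi\restriction n}$: this intersection is non-empty by Corollary~\ref{p} (a decreasing sequence of non-empty $\alpha$-closed sets), and a singleton by the construction. The map is onto, because $\bigcup_{|s|=n}F_s=X$ at every level, so any $x$ lies in a nested sequence $F_{s_0}\supseteq F_{s_1}\supseteq\cdots$ with $|s_n|=n$ and $s_{n+1}$ extending $s_n$, whose limit string $\xi$ satisfies $x\in\bigcap_n F_{\xi\restriction n}=\{f(\xi)\}$. And $f$ is continuous, hence $\alpha$-continuous: if $V$ is $\alpha$-open with $f(\xi)\in V$, then $\bigcap_n\bigl(F_{\xi\restriction n}\cap(X\setminus V)\bigr)=\emptyset$ is a decreasing intersection of $\alpha$-closed sets, so Corollary~\ref{p} yields $n$ with $F_{\xi\restriction n}\subseteq V$, whence the basic clopen cylinder $\{\eta\in D^{\bbN}:\eta\restriction n=\xi\restriction n\}$ is carried into $V$ and $f^{-1}(V)$ is open. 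This realizes $X$ as a continuous image of the closed subspace $Z=D^{\bbN}$ of $D^{\aleph_0}$; replacing $D^{\bbN}$ by $D^{\,w(X)}$ gives the statement for an arbitrary infinite weight.

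The step I expect to be the genuine obstacle is the scheme construction. Without a metric, forcing each $\bigcap_n F_{\xi\restriction n}$ to be a single point requires that whenever $x\in B_k$ while $y\notin Cl_\alpha(B_k)$, the cut by $Cl_\alpha(B_k)$ actually puts $x$ and $y$ into different pieces --- a regularity-type property of the $\alpha$-base. So one must either add a mild $\alpha$-separation axiom, derive it from $\alpha$-$T_2$ together with countable $\alpha$-compactness, or work modulo $\alpha$-indistinguishability of points; and one must check that keeping every $F_s$ non-empty is compatible with this separation bookkeeping (it is, using the ``repeat a piece'' option). Granting the scheme, the remaining verifications are a routine transcription of the classical argument, the appeals to compactness being routed through Corollary~\ref{p} and the Example.
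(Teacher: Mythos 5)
Your plan reproduces the Alexandroff--Hausdorff scheme argument, but the step you yourself flag as the ``genuine obstacle'' is precisely where the proof fails, and it is not a removable technicality. Forcing each branch intersection $\bigcap_n F_{\xi\restriction n}$ to be a singleton requires that for distinct $x,y\in X$ some basic set $B_k$ satisfies $x\in B_k$ and $y\notin Cl_\alpha(B_k)$ (or symmetrically) --- an $\alpha$-regularity/$\alpha$-Hausdorff type property. The theorem's hypotheses supply only countable $\alpha$-compactness and a weight condition, with no separation axiom at all, so a branch can terminate in an intersection containing two $\alpha$-inseparable points; then $f$ is not defined, and picking a point arbitrarily breaks both surjectivity onto $X$ and your continuity argument, which uses $\bigcap_n F_{\xi\restriction n}\subseteq V$. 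Your fallback of working ``modulo $\alpha$-indistinguishability'' produces a continuous surjection onto a quotient of $X$, not onto $X$, so it does not prove the stated result. A secondary unsupported step: the weight hypothesis is a condition on $\tau$, and the $\alpha$-topology of a second countable space need not be second countable, so the countable $\alpha$-base $\{B_k\}$ you start from is not available from the hypotheses; running the scheme with a countable base of $\tau$ and $\alpha$-closures only sharpens the separation problem. So the proposal has a genuine gap at the load-bearing step, acknowledged but not closed.

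For comparison, the paper avoids any point-separating scheme: it uses the countable weight to place the space inside the Alexandroff cube $F^{\aleph_0}$, notes that the identity map $D^{\aleph_0}\longrightarrow F^{\aleph_0}$ is continuous, sets $K=h^{-1}(Y)$ and $f=h|_K$, uses countable $\alpha$-compactness to verify that preimages of disjoint $\alpha$-closed sets have disjoint closures, and then applies the immediately preceding extension theorem to extend $f$ over $\overline{K}$, a closed subspace of $D^{\aleph_0}$, whose image contains $Y$. That route pushes the separation burden into the embedding and the extension criterion rather than into a Cantor scheme. If you want to rescue your approach you would need to add $\alpha$-$T_2$ plus an $\alpha$-regularity assumption (thereby proving a narrower statement), or reroute through an embedding-plus-extension argument of the paper's kind.
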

	\begin{proof}
		Suppose that the weight of the countably $\alpha$-compact space $Y$ is $\aleph_0$, $Y$ is homeomorphic to the Alexandroff cube $F^{\aleph_0}$, the identity mapping from $D^{\aleph_0}$ to $F^{\aleph_0}$ is continuous because each member of the canonical base of $F^{\aleph_0}$ is $\alpha$-open and $\alpha$-closed in $D^{\aleph_0}$. Now, if $F_1$ and $F_2$ are two disjoint $\alpha$-closed subsets of $Y$, then there exist two $\alpha$-closed subsets $K_1$ and $K_2$ of $F^{\aleph_0}$ such that $F_i=Y\cap K_i$ for all $i=1,2$. Now $Y\subseteq F^{\aleph_0}-(K_1\cap K_2)$. For every $x\in Y$, there exist $V_x\in\beta(x)$ such that $x\in V_{x}\subseteq F^{\aleph_0}-(K_1\cap K_2)$. Clearly, there exists a finite family $\left\{x_1,x_2,\cdots x_n\right\}$ in $Y$ such that $ Y\subseteq V_{x_1}\cup V_{x_2}\cup\cdots\cup V_{x_n}\subseteq F^{\aleph_0}-(K_1\cap K_2)$. Now $V=V_{x_1}\cup V_{x_2}\cup\cdots\cup V_{x_n}$ is an $\alpha$-open and $\alpha$-closed subset of $D^{\aleph_0}$ containing $K$ where $K=h^{-1}(Y)$ and $f=h|_K:K\longrightarrow Y$, hence $X=\overline{K}\subseteq V\subseteq D^{\aleph_0}-(K_1\cap K_2)$. Now $\overline{f^{-1}(K_1)}\cap\overline{f^{-1}(K_2)}\subseteq\overline{K}\cap K_1\cap K_2\subseteq\left(D^{\aleph_0}-(K_1\cap K_2)\right)\cup (K_1\cap K_2)=\emptyset$ because $f^{-1}(K_i)=\overline{K\cap K_i}\subseteq \overline{K}\cap K_i$ for every $i=1,2$. Thus, there exists an extension of the function $f$ which is $F:X \longrightarrow Y$ such that, $Y=f(K)\subseteq F(X)$, that is $Y$ is a continuous image of an $\alpha$-closed subspace of $D^{\aleph_0}$ in $X$.
	\end{proof}
	\begin{thm}
		The Cartesian product $\displaystyle\Pi_{\beta\in\Gamma}X_\beta$ is countably $\alpha$-compact iff for every $\beta\in\Gamma$ the spaces $X_\beta$ is countably $\alpha$-compact with $X_\beta\neq\emptyset$ for all $\beta\in\Gamma$.
	\end{thm}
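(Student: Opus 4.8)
The plan is to derive both implications from the decreasing–sequence form of the finite intersection property characterisation in Corollary~\ref{p}, applied along the canonical projections $p_\beta\colon X\to X_\beta$, where $X=\prod_{\gamma\in\Gamma}X_\gamma$. The only fact about projections I would use is that each $p_\beta$ is continuous, open, and (because $X\neq\emptyset$ forces every $X_\gamma\neq\emptyset$) surjective.

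\emph{The ``only if'' direction.} First I would record a one–line lemma: for a continuous open map $f$ the preimage of an $\alpha$-open set is $\alpha$-open. Indeed, if $V\subseteq Y$ satisfies $V\subseteq Int(Cl(Int(V)))$, then applying $f^{-1}$ and using continuity to move interiors past $f^{-1}$ (so $f^{-1}(Int(A))\subseteq Int(f^{-1}(A))$) and openness to move closures past $f^{-1}$ (so $f^{-1}(Cl(B))\subseteq Cl(f^{-1}(B))$) yields $f^{-1}(V)\subseteq Int(Cl(Int(f^{-1}(V))))$; hence preimages of $\alpha$-closed sets under $p_\beta$ are $\alpha$-closed. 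Now let $F_1\supseteq F_2\supseteq\cdots$ be a decreasing sequence of nonempty $\alpha$-closed subsets of $X_\beta$. Then $\{p_\beta^{-1}(F_i)\}_{i\in\bbN}$ is a decreasing sequence of nonempty $\alpha$-closed subsets of $X$, so Corollary~\ref{p} and countable $\alpha$-compactness of $X$ give $\emptyset\neq\bigcap_i p_\beta^{-1}(F_i)=p_\beta^{-1}\big(\bigcap_i F_i\big)$, whence $\bigcap_i F_i\neq\emptyset$. By Corollary~\ref{p} again $X_\beta$ is countably $\alpha$-compact, and nonemptiness of each factor is immediate.

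\emph{The ``if'' direction.} Assume every $X_\beta$ is nonempty and countably $\alpha$-compact, and let $\{F_i\}_{i\in\bbN}$ be a decreasing sequence of nonempty $\alpha$-closed subsets of $X$. For each $\beta$ the sets $Cl_\alpha(p_\beta(F_i))$ form a decreasing sequence of nonempty $\alpha$-closed subsets of $X_\beta$, so by Corollary~\ref{p} one can choose $x_\beta\in\bigcap_i Cl_\alpha(p_\beta(F_i))$; set $x=(x_\beta)_\beta\in X$. It then remains to show $x\in F_i=Cl_\alpha(F_i)$ for every $i$, i.e.\ that every $\alpha$-open neighbourhood of $x$ meets $F_i$, and by passing to a basic neighbourhood it suffices to treat $U=\prod_\beta U_\beta$ with $U_\beta=X_\beta$ for $\beta\notin\{\beta_1,\dots,\beta_k\}$ and $x_{\beta_j}\in U_{\beta_j}$ open.

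\emph{The main obstacle} is exactly this amalgamation step: from $x_{\beta_j}\in Cl_\alpha(p_{\beta_j}(F_i))$ one extracts, for each $\beta_j$ \emph{separately}, a point of $F_i$ whose $\beta_j$-coordinate lies in $U_{\beta_j}$, whereas one needs a \emph{single} point of $F_i$ satisfying all $k$ constraints simultaneously — the familiar reason a naive coordinatewise Tychonoff argument does not by itself settle productivity of countable $\alpha$-compactness. The route I would take to close this gap is to pass to the finite subproduct $\prod_{j=1}^{k}X_{\beta_j}$, invoking the earlier Proposition that the product of a countably $\alpha$-compact space with an $\alpha$-compact space is countably $\alpha$-compact (together with the elementary fact that a finite product of $\alpha$-compact spaces is $\alpha$-compact) to conclude inductively that the finite partial products are countably $\alpha$-compact, and then lifting back to $X$ through the open continuous surjection onto $\prod_{j=1}^{k}X_{\beta_j}$ exactly as in the first part. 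I expect the delicate point of the write-up to be organising this reduction so that the chosen partial points stay compatible as $k$ grows and continue to lie in every $F_i$; if the cited results turn out to be insufficient even in the finite case, the statement should be read under the implicit proviso that all but finitely many $X_\beta$ are $\alpha$-compact, under which the finite-subproduct reduction goes through cleanly.
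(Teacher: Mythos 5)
Your ``only if'' direction is fine: the pullback lemma for continuous open maps is correct, and pulling a decreasing sequence of nonempty $\alpha$-closed subsets of $X_\beta$ back through $p_\beta$ and applying Corollary~\ref{p} twice is a legitimate (and arguably cleaner) variant of the paper's appeal to the projections being $\alpha$-continuous surjections. The genuine gap is entirely in the ``if'' direction, and it is exactly the amalgamation step you flag yourself: choosing $x_\beta\in\bigcap_i Cl_\alpha(p_\beta(F_i))$ coordinate by coordinate gives, for a basic neighbourhood constraining finitely many coordinates $\beta_1,\dots,\beta_k$, a possibly different point of $F_i$ witnessing each constraint, and nothing in your argument produces a single point of $F_i$ meeting all $k$ constraints simultaneously. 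The repair you sketch does not close this: the earlier Proposition you invoke requires one factor to be $\alpha$-compact, whereas after reducing to the finite subproduct $\prod_{j=1}^{k}X_{\beta_j}$ all you have is countable $\alpha$-compactness of each factor, and a finite product of countably ($\alpha$-)compact spaces need not be countably ($\alpha$-)compact -- the paper itself records this failure when it notes that the product of two countably $\alpha$-compact spaces need not be compact. Your fallback ``implicit proviso'' that all but finitely many factors be $\alpha$-compact amounts to proving a different statement, not the one asserted.

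For comparison, the paper attempts to close precisely this gap by a different device: it extends the given countable family $\tilde{F}$ with the finite intersection property to a maximal family $F_0$ with the finite intersection property, picks $x_\beta\in\bigcap_{F}\overline{p_\beta(F)}$, and then uses maximality to argue that every canonical (basic) neighbourhood of the resulting point belongs to the family and hence meets every $F_i$ -- the Bourbaki-style maximal-filter argument from the classical proof of Tychonoff's theorem. Maximality is the missing idea in your write-up: it is what converts the coordinatewise choices into a single cluster point of the whole family, with no induction over finite subproducts needed. (Whether that mechanism is itself adequate for merely countable families -- maximal FIP families are tailored to full compactness, and countable compactness is notoriously non-productive -- is a separate concern about the theorem as stated, but it is the route the paper takes and the one your proposal lacks.)
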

	\begin{proof}
		The Cartesian product $\displaystyle\Pi_{\beta\in\Gamma}X_\beta$ is countably  $\alpha$-compact because the projection $p:X\longrightarrow X_\beta$ is an $\alpha$-continuous surjective function. On the other hand, if $\left\{X_\beta:\beta\in\Gamma\right\}$ is a family of non-empty countably $\alpha$-compact spaces, then $\displaystyle\Pi_{\beta\in\Gamma}X_\beta$ is an $\alpha-T_2$ space. Let $\tilde{F}=\left\{F_i: i\in\bbN\right\}$ of $\alpha$-closed subsets of $X$ that has the finite intersection property, so $\tilde{F}$ is contained in a maximal family $F_0$ that has the finite intersection property. Now the claim that $\cap\tilde{F}\neq\emptyset$. Since $F_0$ is maximal, $\displaystyle\cap F_i\in\tilde{F}$.  Consider the non-empty $\alpha$-open subset of $X$ say $U$, $U\cap F_i\neq \emptyset$ for all $i\in\bbN$.  The family $\left\{p_\beta(F_i):\beta\in\Gamma\right\}$ is an $\alpha$-closed subset of $X_\beta$ and has the finite intersection property. As a consequence, for all  $\beta\in\Gamma$, there exists a point $x_\beta\in\displaystyle\cap_{F_i\in\tilde{F}}\overline{P_\beta(F)}\subseteq X_\beta$. Now, if $x_\beta\in V_\beta$ for some $\alpha$-open subset $V_\beta$ of $X_\beta$, then  $V_\beta\cap p_\beta (F)\neq \emptyset$ for every $F\in\tilde{F}$ thus $p_\beta^{-1}(V_\beta)\in \tilde{F}$ and all members of the canonical bases of $x_\beta$ belong to $\tilde{F}$.
	\end{proof}
	\begin{cor}
		If $\displaystyle\Pi_{\beta\in\Gamma}X_\beta$ is an $\alpha$-continuous real valued function and $\left\{X_\beta: \beta\in \Gamma\right\}$ is a family of $\alpha$-compact spaces that depend on countably many coordinates where  $X_\beta$ is an $\alpha$-$T_2$  space and contains a dense subset $D$ that is represented as a countable union of $\alpha$-compact subsets, then $f$ depends on countably many coordinates.
	\end{cor}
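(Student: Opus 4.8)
The plan is to exhibit a countable set $\Gamma_0\subseteq\Gamma$ of coordinates such that $f(x)=f(y)$ whenever $x|_{\Gamma_0}=y|_{\Gamma_0}$, by a closing-off (back-and-forth) construction. The role of the hypotheses on the factors is to supply, for each $\beta\in\Gamma$, a countable $\alpha$-dense subset $Q_\beta\subseteq X_\beta$: the given dense set is a countable union of $\alpha$-compact pieces, and together with the $\alpha$-$T_2$ axiom this lets one select a fixed countable $\alpha$-dense $Q_\beta$ in each factor. These countable sets are the only \emph{small} data that the recursion will be allowed to mention.

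First I would fix a countable base $\{V_k:k\in\bbN\}$ of $\bbR$ and put $W_k=f^{-1}(V_k)$; since $f$ is $\alpha$-continuous each $W_k$ is $\alpha$-open in $X=\Pi_{\beta\in\Gamma}X_\beta$. The key local statement to pin down is: every point of an $\alpha$-open set $W\subseteq X$ has, after shrinking, a neighbourhood that is a basic box depending on only finitely many coordinates and still contained in $W$. I would obtain this from the description of $\alpha$-open sets via $Int(Cl(Int(\cdot)))$ applied in the product, together with the $\alpha$-density of the $Q_\beta$ to move the finitely many free coordinates into the countable dense sets; the box $B$ so produced has a finite support $S(B)\subseteq\Gamma$, meaning that membership in $B$ is decided purely by the coordinates in $S(B)$.

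Next I would run the recursion. Starting from any countable $\Gamma_0^{(0)}$, build an increasing chain of countable sets $\Gamma_0^{(0)}\subseteq\Gamma_0^{(1)}\subseteq\cdots$ so that for every $n$, every $k\in\bbN$ and every finite partial map $p$ with domain a finite subset of $\Gamma_0^{(n)}$ and values in $\bigcup_\beta Q_\beta$, the following holds: if some point of $X$ extending $p$ lies in $W_k$, then one such point is chosen together with a finite-support box $B\subseteq W_k$ around it, and $S(B)$ is adjoined to $\Gamma_0^{(n+1)}$. Only countably many pairs $(k,p)$ occur (as $\Gamma_0^{(n)}$ and each $Q_\beta$ are countable), each contributing a finite set, so every $\Gamma_0^{(n+1)}$ is countable; set $\Gamma_0=\bigcup_n\Gamma_0^{(n)}$. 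To verify that $f$ depends only on $\Gamma_0$, suppose $x|_{\Gamma_0}=y|_{\Gamma_0}$ but $f(x)\neq f(y)$, and pick disjoint base sets $V_k\ni f(x)$, $V_{k'}\ni f(y)$. By $\alpha$-continuity at $x$ and $\alpha$-density of the $Q_\beta$, modify the finitely many relevant coordinates of $x$ to $Q_\beta$-values, obtaining $\tilde x\in W_k$ whose restriction to a finite $F\subseteq\Gamma_0$ is a condition $p$ of the kind treated above and which agrees with $x$, hence with $y$, on $F$. The recursion then produced a box $B\subseteq W_k$ with $S(B)\subseteq\Gamma_0$; since $y$ and $\tilde x$ agree on $\Gamma_0\supseteq S(B)$ we get $y\in B\subseteq W_k$, so $f(y)\in V_k$, contradicting $V_k\cap V_{k'}=\emptyset$. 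Hence $f$ factors through the projection onto $\Pi_{\beta\in\Gamma_0}X_\beta$ and depends on countably many coordinates.

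I expect the main obstacle to be the local statement of the second paragraph: unlike genuine open sets, $\alpha$-open sets in a product are not automatically generated by finite-support boxes, so one must argue carefully that the neighbourhood selected really is a finite-support set whose membership ignores all other coordinates — this is the step where the product topology and the $\alpha$-operator interact nontrivially. A secondary point requiring care is the passage from the given dense countable-union-of-$\alpha$-compact set to an honestly countable $\alpha$-dense $Q_\beta$: a countable union of $\alpha$-compact sets need not be countable, so one has to invoke $\alpha$-$T_2$ together with $\alpha$-compactness (in the spirit of the cardinality estimates obtained earlier) before the counting in the recursion is legitimate.
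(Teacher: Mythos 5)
Your closing-off strategy is the classical Mazur/Engelking-style argument, and it is a genuinely different route from the paper (which simply reduces to the preceding product theorem via the dense $\sigma$-$\alpha$-compact sets), but as written it has two genuine gaps, the first of which you yourself flagged and which does not go away. The ``local statement'' of your second paragraph is false for $\alpha$-open sets: $W\subseteq Int(Cl(Int(W)))$ only guarantees that a point $x\in W_k=f^{-1}(V_k)$ has a basic finite-support box $B$ with $B\subseteq Cl(Int(W_k))$, not $B\subseteq W_k$; an $\alpha$-open set may contain points having \emph{no} neighbourhood inside it (already in $\bbR$, $(0,1)\setminus\{\tfrac12+\tfrac1n:n\geq 3\}$ is $\alpha$-open but no neighbourhood of $\tfrac12$ is contained in it). Hence the box your recursion stores cannot be taken inside $W_k$, and the final step ``$y\in B\subseteq W_k$, so $f(y)\in V_k$'' collapses: from $y\in Cl(Int(W_k))$ you get no control on $f(y)$, because $\alpha$-continuity only relates $f$ to the $\alpha$-closure, which is in general strictly smaller than the ordinary closure picked up by the box. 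No amount of ``shrinking'' or moving coordinates into dense sets repairs this, since density arguments control nearby points of $Int(W_k)$, not the value of $f$ at the specific point $y$.

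The second gap is the reduction to countable $\alpha$-dense sets $Q_\beta$, on which the entire counting in your recursion rests. The hypotheses give each $X_\beta$ a dense subset that is a countable union of $\alpha$-compact sets; this is a smallness condition of Lindel\"of/compactness type, not of separability type. An ($\alpha$-)compact ($\alpha$-)Hausdorff space need not be separable (e.g. $\{0,1\}^{\kappa}$ for large $\kappa$, or a long line compactification), so no appeal to $\alpha$-$T_2$ or to the earlier cardinality estimates produces a countable dense $Q_\beta$, and without it the step ``only countably many pairs $(k,p)$ occur'' is not legitimate. The hypotheses are really tailored to the compactness-style argument: they make the product (countably) $\alpha$-compact by the previous theorem, which is exactly how the paper concludes; if you want to salvage your approach you would need either separable factors or a chain-condition on the product, neither of which is given.
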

	\begin{proof}
		Let $X_{\beta_1} \subseteq X_{\beta_2}\subseteq\cdots, $ be a sequence of $\alpha$-compact subsets of $X_\beta$, then $\displaystyle \bigcup_{i\in\bbN} X_{\beta_i}$ is dense in $X_\beta$ and $\Pi_{i\in\bbN}X_{\beta_i}=X_i$,  the result follows from previous theorem. 
		\begin{defn}
			A topological space $(X,\tau)$ is countably $\alpha$-pseudocompact if it is Tychonoff and any real valued $\alpha$-continuous defined on it is bounded.
		\end{defn}
	\end{proof}
	
	\begin{prop}
		If the topological space $(X,\tau)$ is Tychonoff, then the following are equivalent: 
		\begin{itemize}
			\item[\rm(i)]  $X$ is countably $\alpha$-pseudocompact.

			\item[\rm(ii)] Every locally finite family of non-empty $\alpha$-open subsets of $X$ is finite.
			
			\item[\rm(iii)] Every locally finite open cover of $X$ consisting of non-empty $\alpha$-open subsets is finite.
		\end{itemize}
		
	\end{prop}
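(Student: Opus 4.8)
The plan is to establish the cycle $(i)\Rightarrow(ii)\Rightarrow(iii)\Rightarrow(i)$, of which $(ii)\Rightarrow(iii)$ is immediate, since a locally finite open cover of $X$ by non-empty $\alpha$-open sets is in particular a locally finite family of non-empty $\alpha$-open sets, hence finite by $(ii)$. Two elementary facts about $\alpha$-open sets will be used repeatedly: first, the $\alpha$-open subsets of $X$ form a topology finer than $\tau$, so an $\alpha$-continuous real-valued function is the same thing as a continuous function out of $X$ carrying that finer topology; in particular finite sums of $\alpha$-continuous functions are $\alpha$-continuous, and $g\circ h$ is $\alpha$-continuous whenever $h$ is $\alpha$-continuous and $g$ is continuous. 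Second, a non-empty $\alpha$-open set $U$ has non-empty interior, immediately from $U\subseteq \mathrm{Int}(\mathrm{Cl}(\mathrm{Int}(U)))$. With these in hand the statement is the natural transcription of the classical feeble-compactness characterisation of pseudocompactness, the Tychonoff hypothesis entering to supply Urysohn-type $\alpha$-continuous functions via complete $\alpha$-regularity (the earlier Proposition).

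For $(iii)\Rightarrow(i)$ I would argue by contraposition. If $X$ is not countably $\alpha$-pseudocompact then, $X$ being Tychonoff, there is an unbounded $\alpha$-continuous map $f\colon X\to\bbR$; replacing $f$ by $|f|$ (still $\alpha$-continuous, as a composition with a continuous map) we may assume $f\ge 0$ and unbounded above. For $n\ge 0$ set $W_{n}=f^{-1}\big((n-1,\,n+1)\big)$. Each $W_{n}$ is $\alpha$-open; the non-empty ones among them cover $X$ (their union already contains $f^{-1}([0,\infty))=X$), and there are infinitely many non-empty ones since $f$ is unbounded above. Moreover the family is locally finite: given $x\in X$ the $\alpha$-open set $f^{-1}\big((f(x)-1,\,f(x)+1)\big)$ is a neighbourhood of $x$ meeting only those $W_{n}$ with $|\,n-f(x)\,|<2$. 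This yields a locally finite infinite cover of $X$ by non-empty $\alpha$-open sets, contradicting $(iii)$.

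The substance is $(i)\Rightarrow(ii)$, again by contraposition. Let $\{U_{\gamma}\}_{\gamma\in\Gamma}$ be an infinite locally finite family of non-empty $\alpha$-open subsets of $X$; choose a countably infinite subfamily $\{U_{n}\}_{n\in\bbN}$, put $O_{n}=\mathrm{Int}(U_{n})\neq\emptyset$, and pick $x_{n}\in O_{n}$. Since $X\setminus O_{n}$ is $\tau$-closed and does not contain $x_{n}$, complete $\alpha$-regularity yields an $\alpha$-continuous $f_{n}\colon X\to[0,1]$ with $f_{n}(x_{n})=1$ and $f_{n}\equiv 0$ on $X\setminus O_{n}$; in particular $f_{n}$ vanishes outside $U_{n}$. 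Define $f=\sum_{n\in\bbN}n\,f_{n}$. Local finiteness of $\{U_{n}\}$ ensures that every point of $X$ has a neighbourhood on which all but finitely many $f_{n}$ vanish identically, so $f$ is well defined and, on each such neighbourhood, coincides with a finite sum of the $\alpha$-continuous functions $f_{n}$; patching these finite sums along the resulting cover shows $f$ is $\alpha$-continuous, while $f(x_{n})\ge n$ for every $n$ shows $f$ is unbounded. Hence $X$ is not countably $\alpha$-pseudocompact.

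I expect the \emph{main obstacle} to be the legitimacy of the infinite sum $f$ in the last paragraph, i.e.\ a gluing principle of the form: a real-valued function agreeing with an $\alpha$-continuous function on each member of a cover $\{O_{x}\}$ of $X$ is itself $\alpha$-continuous. This should follow by writing $f^{-1}(V)$ as the union over $x$ of $\big(f^{-1}(V)\big)\cap O_{x}$ and noting each summand is the intersection of an $\alpha$-open set with $O_{x}$, hence $\alpha$-open; but it must be isolated as a lemma, and one must decide whether the witnessing neighbourhoods $O_{x}$ are taken $\tau$-open or $\alpha$-open. That same choice has to be made consistently in the meaning of ``locally finite'' in $(ii)$ and $(iii)$, since the cover manufactured in $(iii)\Rightarrow(i)$ is only locally finite relative to $\alpha$-open neighbourhoods; once the convention is fixed and the gluing lemma recorded, the remainder is the classical feeble-compactness argument carried over verbatim to the $\alpha$-setting.
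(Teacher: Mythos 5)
Your proposal is correct, and on the one implication the paper actually proves, $(i)\Rightarrow(ii)$, it is essentially the paper's argument: pick $x_i$ in the $i$-th member of the infinite locally finite family, use the Tychonoff hypothesis to produce an $\alpha$-continuous $f_i$ vanishing off that member with a value growing like $i$ at $x_i$, and sum to get an unbounded $\alpha$-continuous function. Two differences are worth noting. First, the paper applies the Tychonoff property directly to separate $x_i$ from $X-u_i$, which is only $\alpha$-closed, not closed; your passage to $O_n=\mathrm{Int}(U_n)\neq\emptyset$ repairs this small imprecision, and your isolation of the gluing/pasting lemma in the finer topology $\tau^{\alpha}$ makes explicit why the locally finite sum is $\alpha$-continuous, which the paper merely asserts. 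Second, the paper proves only $(i)\Rightarrow(ii)$ and leaves the remaining implications ``to readers''; you supply $(ii)\Rightarrow(iii)$ (trivial) and $(iii)\Rightarrow(i)$ via the classical feeble-compactness construction $W_n=f^{-1}((n-1,n+1))$, correctly flagging that this cover is locally finite only with respect to $\alpha$-open neighbourhoods, so the notion of ``locally finite'' in $(ii)$ and $(iii)$ must be fixed (most naturally in the $\alpha$-sense, under which all three implications go through with your pasting argument). So your write-up is a more complete and more careful version of the route the paper takes.
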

	\begin{proof}
		We will prove only the implication $(i)\implies (ii)$ and leave others to readers as an easy exercise. Let $X$ be a countably $\alpha$-pseudocompact space, and there exists a locally finite family of non-empty $\alpha$-open subsets $\tilde{U}=\left\{u_i: i\in\bbN\right\}$. Consider $x_i\in u_i$, since $X$ is Tychonoff, there exists a real valued $\alpha$-continuous function $f_i: X\longrightarrow \bbR$ such that $f_i (x_i )=i$ and $f_i (X-u_i )\subseteq \left\{0\right\}$.  But $\tilde{U} $is locally finite, then $\displaystyle f(x)=\sum_{i=1}^{\infty}|f_i(x)|$ is a real valued $\alpha$-continuous function that is not bounded, thus $X$ is not $\alpha$-pseudocompact.
	\end{proof}
\begin{thm}
If the topological space $(X,\tau)$ is Tychonoff, then the following are equivalent: 
\begin{itemize}
	\item[\rm(i)] $X$ is countably $\alpha$-pseudocompact.
	\item[\rm(ii)] If $\tilde{V}=\left\{V_i:i\in\bbN\right\}$ is a family of non-empty $\alpha$-open subsets of $X$ with finite intersection property, then $\displaystyle\bigcap_{i\in\bbN} \overline{V_i}\neq\emptyset$.
\end{itemize} 
\end{thm}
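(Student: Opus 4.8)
The plan is to prove the two contrapositives $\lnot(i)\Rightarrow\lnot(ii)$ and $\lnot(ii)\Rightarrow\lnot(i)$, using the preceding proposition as the bridge: for a Tychonoff space, $X$ is countably $\alpha$-pseudocompact if and only if every locally finite family of non-empty $\alpha$-open subsets of $X$ is finite. Throughout I will use two standard permanence facts — that $\alpha O(X)$ is a topology on $X$, so arbitrary unions and finite intersections of $\alpha$-open sets are $\alpha$-open, and that if $\{A_j\}$ is a locally finite family then $\overline{\bigcup_j A_j}=\bigcup_j\overline{A_j}$.

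For $(i)\Rightarrow(ii)$ I argue by contraposition. Suppose $(ii)$ fails, so there is a family $\tilde V=\{V_i:i\in\bbN\}$ of non-empty $\alpha$-open sets with the finite intersection property and $\bigcap_{i\in\bbN}\overline{V_i}=\emptyset$. Set $W_i=V_1\cap\cdots\cap V_i$; each $W_i$ is $\alpha$-open, non-empty by the finite intersection property, the sequence $(W_i)$ is decreasing, and $\overline{W_i}\subseteq\overline{V_i}$ forces $\bigcap_{i}\overline{W_i}=\emptyset$. For each $x\in X$ choose $n(x)$ with $x\notin\overline{W_{n(x)}}$ and an open neighbourhood $O_x$ of $x$ with $O_x\cap W_{n(x)}=\emptyset$; since the $W_i$ decrease, $O_x$ misses every $W_i$ with $i\ge n(x)$, so $O_x$ meets only $W_1,\dots,W_{n(x)-1}$. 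Hence $\{W_i:i\in\bbN\}$ is locally finite. Moreover it cannot be finite: a decreasing sequence of sets is a finite family only if it is eventually constant, say equal to a non-empty set $W$, but then any point of $W$ has every neighbourhood meeting infinitely many $W_i$, contradicting local finiteness. So $\{W_i\}$ is an infinite locally finite family of non-empty $\alpha$-open sets, and by the preceding proposition $X$ is not countably $\alpha$-pseudocompact.

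For $(ii)\Rightarrow(i)$ I again argue by contraposition. Suppose $X$ is not countably $\alpha$-pseudocompact; the preceding proposition then yields an infinite locally finite family $\{U_i:i\in\bbN\}$ of non-empty $\alpha$-open sets. Put $V_n=\bigcup_{i\ge n}U_i$: this is $\alpha$-open, non-empty since it contains $U_n$, and $(V_n)$ is decreasing, so $\{V_n:n\in\bbN\}$ has the finite intersection property. Since $\{U_i:i\ge n\}$ is again locally finite, $\overline{V_n}=\bigcup_{i\ge n}\overline{U_i}$, so any point of $\bigcap_{n}\overline{V_n}$ would lie in $\overline{U_i}$ for arbitrarily large $i$, i.e. in infinitely many $\overline{U_i}$ — impossible, since local finiteness of $\{U_i\}$ forces every point to lie in only finitely many $\overline{U_i}$. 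Hence $\bigcap_{n}\overline{V_n}=\emptyset$ and $(ii)$ fails.

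The one delicate point — which I regard as the main obstacle — is the passage in $(i)\Rightarrow(ii)$ from ``$\{W_i\}$ is locally finite'' to ``$\{W_i\}$ is an \emph{infinite} family,'' because the preceding proposition concerns families that are finite as collections of sets rather than as indexed sequences; the hypothesis $\bigcap_i\overline{W_i}=\emptyset$ is precisely what rules out the degenerate eventually-constant case. Everything else reduces to the elementary properties of $\alpha$-open sets and of locally finite families recalled at the outset.
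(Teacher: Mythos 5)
Your argument is correct and takes essentially the same route as the paper: both directions are mediated by the preceding proposition (for Tychonoff $X$, countable $\alpha$-pseudocompactness is equivalent to every locally finite family of non-empty $\alpha$-open sets being finite), and the key mechanism is the same link between local finiteness of a decreasing sequence of $\alpha$-open sets and the (non)emptiness of $\bigcap_i \overline{V_i}$. You are in fact more thorough than the paper's own proof, which works only with decreasing sequences rather than arbitrary families with the finite intersection property, does not address the eventually-constant degenerate case you isolate, and dismisses $(ii)\Rightarrow(i)$ as immediate, whereas you make it explicit via the sets $V_n=\bigcup_{i\ge n}U_i$ and the identity $\overline{\bigcup_{i\ge n}U_i}=\bigcup_{i\ge n}\overline{U_i}$ for locally finite families.
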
 
\begin{proof}
$(i)\implies (ii).$ Consider the countably $\alpha$-pseudocompact space $X$, if $U_1\supseteq U_2\supseteq \cdots,$ is a decreasing sequence of  non-empty $\alpha$-open subsets of $X$, then the family $\tilde{U}=\left\{U_i:i\in\bbN\right\}$ is not locally finite, so for some $x\in X$ such that its neighborhood meets infinitely many $U_i's$, thus $\displaystyle\bigcap_{i\in\bbN} \overline{V_i}\neq\emptyset$.
The implication $(ii)\implies (i)$ is immediate.
\end{proof} 
	\begin{prop}
		If $f:X\longrightarrow Y$ is a surjective $\alpha$-continuous function, $X$ is countably $\alpha$-pseudocompact space and $Y$ is Tychonoff, then $Y$ is countably $\alpha$-pseudocompact.
	\end{prop}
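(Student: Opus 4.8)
The plan is to verify the definition of countable $\alpha$-pseudocompactness for $Y$ directly. Tychonoff-ness of $Y$ is assumed, so the entire task is to show that an arbitrary real-valued $\alpha$-continuous function on $Y$ is bounded. I would fix such a function $g\colon Y\longrightarrow\bbR$ and transport the problem to $X$ by forming the composite $h=g\circ f\colon X\longrightarrow\bbR$; if $h$ turns out to be $\alpha$-continuous, then countable $\alpha$-pseudocompactness of $X$ forces $|h|\le M$ on $X$ for some $M$, and surjectivity of $f$ gives, for any $y=f(x)\in Y$, the bound $|g(y)|=|h(x)|\le M$, so $g(Y)\subseteq[-M,M]$. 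Together with the assumed Tychonoff property this would show $Y$ is countably $\alpha$-pseudocompact.

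So the heart of the argument is the claim that $h=g\circ f$ is $\alpha$-continuous. For an open $V\subseteq\bbR$ one writes $h^{-1}(V)=f^{-1}\big(g^{-1}(V)\big)$; $\alpha$-continuity of $g$ makes $g^{-1}(V)$ an $\alpha$-open subset of $Y$, and one then needs $f^{-1}$ to send this $\alpha$-open set to an $\alpha$-open subset of $X$. This is the step I expect to be the real obstacle: it is \emph{not} a formal consequence of $\alpha$-continuity of $f$ alone (which only controls preimages of genuinely open sets), so the argument has to exploit something extra --- either the special structure of the target $\bbR$, or an additional regularity hypothesis on $f$ (for instance that $f$ be $\alpha$-irresolute, i.e.\ that $f^{-1}$ preserve $\alpha$-open sets), which is implicitly what is being used here. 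Once this point is settled, the remaining steps (boundedness of $h$ from the countable $\alpha$-pseudocompactness of $X$, and descent of the bound along the surjection) are immediate.

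An alternative packaging of the same idea, which avoids mentioning functions into $\bbR$ but runs into exactly the same pull-back issue, is the contrapositive through the characterization of the preceding Proposition: were $Y$ not countably $\alpha$-pseudocompact, it would (being Tychonoff) carry an infinite locally finite family $\{V_i:i\in\bbN\}$ of non-empty $\alpha$-open sets; surjectivity makes each $f^{-1}(V_i)$ non-empty, the pull-back of $\alpha$-open sets (again the delicate point) makes them $\alpha$-open in $X$, and the local finiteness transfers because an open neighbourhood of $f(x)$ meeting only finitely many $V_i$ pulls back under the $\alpha$-continuous map $f$ to a neighbourhood of $x$ meeting only finitely many $f^{-1}(V_i)$; this contradicts the characterization applied to the countably $\alpha$-pseudocompact space $X$.
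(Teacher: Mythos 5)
The paper states this proposition without any proof, so there is nothing of its own to compare your argument against; your proposal has to stand on its own. As an outline it is the natural one (pull an $\alpha$-continuous $g\colon Y\to\bbR$ back along $f$, bound $g\circ f$ by countable $\alpha$-pseudocompactness of $X$, push the bound down by surjectivity), but as written it contains a genuine, unresolved gap, and you say so yourself: the central claim that $h=g\circ f$ is $\alpha$-continuous does not follow from the hypotheses. $\alpha$-continuity of $f$ only controls preimages of open subsets of $Y$, while $g^{-1}(V)$ is merely $\alpha$-open; the composition of two $\alpha$-continuous maps need not be $\alpha$-continuous (the paper itself records only that $\alpha$-continuous composed with \emph{continuous} is $\alpha$-continuous). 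Flagging that the proof needs $f$ to be $\alpha$-irresolute, or some structure of $\bbR$, is an honest diagnosis, but it is not a proof: the statement as given is not established, only a strengthened variant of it (with $f$ $\alpha$-irresolute, or with $g$ continuous) is.

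The contrapositive packaging has the same defect plus a second one you did not flag. Even granting that the sets $f^{-1}(V_i)$ are $\alpha$-open, your transfer of local finiteness takes an open $W\ni f(x)$ meeting only finitely many $V_i$ and declares $f^{-1}(W)$ a ``neighbourhood'' of $x$; but $f^{-1}(W)$ is only $\alpha$-open, and an $\alpha$-open set containing $x$ need not contain an ordinary open neighbourhood of $x$ (that would require $x\in Int(f^{-1}(W))$). Since the characterization of countable $\alpha$-pseudocompactness in the preceding proposition is phrased with local finiteness in the usual (open-neighbourhood) sense, this step also needs justification or a reformulation in terms of the $\alpha$-topology. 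So both routes stall at the same pull-back problem, and the second additionally at the local-finiteness transfer; to close the argument you must either add the $\alpha$-irresoluteness hypothesis explicitly or supply a proof that in this situation preimages of the relevant $\alpha$-open sets are $\alpha$-open and that $\alpha$-neighbourhoods suffice in the local-finiteness criterion.
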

	\begin{prop}
		The Cartesian product of countably $\alpha$-pseudocompact space and $\alpha$-compact space is countably $\alpha$-pseudocompact.
	\end{prop}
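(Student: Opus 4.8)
The plan is to read this off from the characterisation of countable $\alpha$-pseudocompactness obtained just above. By that result a Tychonoff space is countably $\alpha$-pseudocompact precisely when every locally finite family of non-empty $\alpha$-open subsets of it is finite; and $X\times Y$ is Tychonoff, being a product of two Tychonoff spaces ($Y$, like every $\alpha$-compact space occurring here, is taken to be Tychonoff). So write $Z=X\times Y$, let $\{W_n:n\in\bbN\}$ be a locally finite family of non-empty $\alpha$-open subsets of $Z$, and aim to show it is finite. The first step is to pass to boxes: a non-empty $\alpha$-open set $W$ has non-empty interior, since $W\subseteq Int(Cl(Int(W)))$ forces $W=\emptyset$ as soon as $Int(W)=\emptyset$; hence each $W_n$ contains a non-empty basic open box $U_n\times V_n$ with $U_n\subseteq X$ and $V_n\subseteq Y$ open and non-empty, and since $U_n\times V_n\subseteq W_n$ the family $\{U_n\times V_n:n\in\bbN\}$ is still locally finite in $Z$.

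The crux is a tube--lemma argument. Since $Y$ is $\alpha$-compact it is compact (an open cover of $Y$ is in particular a cover by $\alpha$-open sets, so it has a finite subcover), and I would use this to show $\{U_n:n\in\bbN\}$ is locally finite in $X$. Fix $x\in X$; for each $y\in Y$ pick an open box $G_y\times H_y\ni(x,y)$ meeting only finitely many of the $U_n\times V_n$, then choose $y_1,\dots,y_k$ with $Y=H_{y_1}\cup\dots\cup H_{y_k}$ and put $G=G_{y_1}\cap\dots\cap G_{y_k}$. If $G\cap U_n\neq\emptyset$, take $x'\in G\cap U_n$, $v\in V_n$, and $j$ with $v\in H_{y_j}$; then $(x',v)\in(G_{y_j}\times H_{y_j})\cap(U_n\times V_n)$, and as each $G_{y_j}\times H_{y_j}$ meets only finitely many boxes, $G$ meets only finitely many $U_n$. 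So $\{U_n:n\in\bbN\}$ is a locally finite family of non-empty open (hence $\alpha$-open) subsets of $X$; since $X$ is countably $\alpha$-pseudocompact, the cited characterisation makes this family finite, i.e. the $U_n$ take only finitely many values $A_1,\dots,A_m$.

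It remains to control the vertical direction, again via compactness of $Y$. For each $x$ the slice $\{x\}\times Y$ is a copy of $Y$, hence compact, and $\{W_n\cap(\{x\}\times Y):n\in\bbN\}$ is locally finite in it (the restriction of a locally finite family to a subspace is locally finite); since a locally finite family in a compact space has only finitely many non-empty members, $W_n\cap(\{x\}\times Y)\neq\emptyset$ for only finitely many $n$, say for $n\in N_x$ with $N_x$ finite. Now fix $a_k\in A_k$ for $k=1,\dots,m$. For any $n$ we have $U_n=A_k$ for some $k$, so $\{a_k\}\times V_n\subseteq U_n\times V_n\subseteq W_n$ and $V_n\neq\emptyset$, whence $n\in N_{a_k}$. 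Thus $\bbN=N_{a_1}\cup\dots\cup N_{a_m}$ is finite, so $\{W_n:n\in\bbN\}$ is a finite family, and therefore $Z$ is countably $\alpha$-pseudocompact.

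The step I expect to carry the weight is the tube--lemma passage of the second paragraph: it is the only place where the hypothesis ``$Y$ $\alpha$-compact'' (rather than merely countably $\alpha$-compact) is really used, through $\alpha$-compact $\Rightarrow$ compact, and it needs a little care because the $\alpha$-open sets do not form a topology, so one must work with genuine open boxes --- which is exactly what the remark ``non-empty $\alpha$-open $\Rightarrow$ non-empty interior'' permits. The slice argument and the appeal to the locally-finite characterisation of countable $\alpha$-pseudocompactness are routine, and one could equally run the final bookkeeping through the finite-intersection-property characterisation proved above.
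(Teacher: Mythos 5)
The paper states this proposition with no proof at all, so there is no argument of the authors' to compare yours against; judged on its own terms, your proof is correct. The route you take --- reduce to the locally finite characterisation of countable $\alpha$-pseudocompactness proved just above, replace each non-empty $\alpha$-open $W_n\subseteq X\times Y$ by a non-empty open box $U_n\times V_n$ (using that a non-empty $\alpha$-open set has non-empty interior), push local finiteness of the boxes down to the family $\{U_n\}$ in $X$ by a tube-lemma argument that uses $\alpha$-compact $\Rightarrow$ compact for $Y$, and finish with the slice argument on $\{a_k\}\times Y$ --- is sound at every step, and the passage to open boxes neatly avoids the trap that $\alpha$-open sets of a product need not be products of $\alpha$-open sets. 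Three small points deserve explicit mention. First, you add the hypothesis that $Y$ is Tychonoff; this is not in the statement, but it is in fact forced if the conclusion is to be possible at all, since the paper's definition of countably $\alpha$-pseudocompact includes Tychonoff, the Tychonoff property is hereditary, and $Y$ embeds in $X\times Y$ as a slice --- so say this once, as you did, and it is legitimate. Second, for $X\times Y$ you use the direction (ii)$\implies$(i) of the characterisation, which the paper only asserts and ``leaves to readers''; your proof therefore rests on that unproved (but standard) implication, which is fair to cite but worth acknowledging. Third, your closing sentence literally concludes that $\bbN=N_{a_1}\cup\dots\cup N_{a_m}$ is finite; as written this is a contradiction with your chosen index set, so either phrase the whole argument as a proof by contradiction (no infinite locally finite family of non-empty $\alpha$-open sets indexed by $\bbN$ can exist, hence every such locally finite family is finite) or begin with an arbitrary index set. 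None of these affects the mathematics, which is complete and, unlike the paper, actually supplies an argument.
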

	\begin{prop}
		The sum $\displaystyle\oplus_{\beta\in\Gamma}X_\beta$ is countably $\alpha$-pseudocompact for all $X_\beta\neq\emptyset$ if and only if $X_\beta$ is countably $\alpha$-pseudocompact for all $\beta\in\Gamma$ and $\Gamma$ is finite.  
	\end{prop}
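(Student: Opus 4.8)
The plan is to follow the same pattern as the earlier theorem on sums of countably $\alpha$-compact spaces, but to argue directly from the defining property of $\alpha$-pseudocompactness (Tychonoff, plus boundedness of every real-valued $\alpha$-continuous function) rather than through a finite-intersection characterization. Throughout I will rely on the standard facts that $\alpha O(X)$ is closed under arbitrary unions, that each summand $X_\beta$ embeds in the sum $S=\oplus_{\beta\in\Gamma}X_\beta$ as a clopen, hence $\alpha$-open and $\alpha$-closed, subspace, and that for such a clopen piece one has $\alpha O(X_\beta)=\{\,U\in\alpha O(S):U\subseteq X_\beta\,\}$ (interior and closure in a clopen subspace agree with those computed in $S$). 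These combine to give the piecewise criterion: a real-valued function on $S$ is $\alpha$-continuous precisely when its restriction to each $X_\beta$ is $\alpha$-continuous, and a function built by prescribing $\alpha$-continuous values on the several clopen summands is $\alpha$-continuous on $S$.

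For the forward implication, assume $S$ is countably $\alpha$-pseudocompact. Since being Tychonoff is hereditary, every $X_\beta$ is Tychonoff; and given an $\alpha$-continuous $f:X_\beta\to\bbR$, extending $f$ by $0$ on all other summands yields, by the piecewise criterion, an $\alpha$-continuous function on $S$, which must be bounded, so $f$ is bounded. Hence each $X_\beta$ is countably $\alpha$-pseudocompact. To see $\Gamma$ is finite, suppose not; choose a countable subfamily $\{\beta_i:i\in\bbN\}\subseteq\Gamma$, and using that each $X_{\beta_i}\neq\emptyset$ define $g:S\to\bbR$ to be the constant $i$ on $X_{\beta_i}$ and $0$ on every other summand. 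For each open $V\subseteq\bbR$ the set $g^{-1}(V)$ is a union of (clopen, hence $\alpha$-open) summands and so is $\alpha$-open; thus $g$ is $\alpha$-continuous, yet $g$ is unbounded, contradicting countable $\alpha$-pseudocompactness of $S$. Therefore $\Gamma$ is finite.

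For the reverse implication, suppose each $X_\beta$ is countably $\alpha$-pseudocompact and $\Gamma$ is finite. A topological sum of Tychonoff spaces is again Tychonoff, so $S$ is Tychonoff. Let $f:S\to\bbR$ be $\alpha$-continuous. For each $\beta$ the restriction $f|_{X_\beta}$ is $\alpha$-continuous, being the restriction of an $\alpha$-continuous map to a clopen subspace, and hence is bounded by some $M_\beta$ because $X_\beta$ is countably $\alpha$-pseudocompact. As $\Gamma$ is finite and $\{X_\beta:\beta\in\Gamma\}$ covers $S$, the function $f$ is bounded by $\max_{\beta\in\Gamma}M_\beta$. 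Consequently $S$ is countably $\alpha$-pseudocompact.

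The only genuinely non-formal point — what I expect to be the main obstacle — is the bookkeeping about how $\alpha$-openness interacts with the clopen decomposition of a topological sum, i.e.\ the equivalence between $\alpha$-continuity of a map on $S$ and $\alpha$-continuity of its restrictions to the summands. This rests on the stability of $\alpha O$ under arbitrary unions together with the identity $\alpha O(X_\beta)=\{\,U\in\alpha O(S):U\subseteq X_\beta\,\}$ for clopen $X_\beta$; once these are recorded, both directions of the proposition are routine.
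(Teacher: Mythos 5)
Your argument is correct, and in fact the paper gives no proof of this proposition at all --- it is one of the several statements about countable $\alpha$-pseudocompactness stated without justification near the end of Section~3 --- so there is no paper proof to compare against; the closest analogue is the earlier theorem on sums of countably $\alpha$-compact spaces, whose proof the authors only sketch in two lines. Your route supplies exactly the missing content: the key technical point, that in a topological sum $S=\oplus_{\beta\in\Gamma}X_\beta$ interior and closure decompose summand-wise, so that $A\in\alpha O(S)$ iff $A\cap X_\beta\in\alpha O(X_\beta)$ for every $\beta$, is what makes the piecewise criterion for $\alpha$-continuity legitimate, and with it both directions go through as you say (Tychonoff is hereditary and finitely --- indeed arbitrarily --- summable, restriction to a clopen summand preserves $\alpha$-continuity, and the step function taking the constant value $i$ on $X_{\beta_i}$ is even continuous, hence $\alpha$-continuous, and unbounded when $\Gamma$ is infinite and all summands are non-empty). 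The only cosmetic remark is that the non-emptiness hypothesis is used precisely and only in the unboundedness argument forcing $\Gamma$ finite, which you flag correctly; everything else is routine bookkeeping, and your write-up is more complete than anything in the paper.
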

	\begin{cor}
		Countably $\alpha$-pseudocompactness is not hereditary with respect to $\alpha$-closed sets.
	\end{cor}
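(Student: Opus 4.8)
The statement is a non-preservation claim, so the plan is to exhibit one explicit witness: a Tychonoff, countably $\alpha$-pseudocompact space $X$ together with an $\alpha$-closed subspace $F$ that fails to be countably $\alpha$-pseudocompact. The natural candidate is the Isbell--Mrówka $\Psi$-space. Fix a maximal almost disjoint family $\mathcal A$ of infinite subsets of $\bbN$, and let $X=\bbN\cup\mathcal A$, where each $n\in\bbN$ is isolated and a basic neighbourhood of a point $A\in\mathcal A$ has the form $\{A\}\cup(A\setminus S)$ with $S\subseteq\bbN$ finite. It is classical that $X$ is then Tychonoff, locally compact, separable and first countable, and that $X$ is pseudocompact \emph{precisely because} $\mathcal A$ is maximal.

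First I would verify that $X$ is countably $\alpha$-pseudocompact, via the characterization established above (a Tychonoff space is countably $\alpha$-pseudocompact iff every locally finite family of non-empty $\alpha$-open subsets is finite). The elementary remark to use is that a non-empty $\alpha$-open set $U$ satisfies $U\subseteq Int(Cl(Int(U)))$, which forces $Int(U)\neq\emptyset$. Consequently, given any infinite locally finite family of non-empty $\alpha$-open subsets of $X$, the family of their interiors is a locally finite family of non-empty open sets, and passing to a subfamily of distinct members (a subfamily of a locally finite family is locally finite) yields an \emph{infinite} locally finite family of non-empty open sets — contradicting pseudocompactness of $X$. Hence every locally finite family of non-empty $\alpha$-open subsets of $X$ is finite, so $X$ is countably $\alpha$-pseudocompact.

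Next, take $F=\mathcal A\subseteq X$. Its complement $\bbN$ is open, so $F$ is closed in $X$, and every closed set is $\alpha$-closed: if $F=Cl(F)$ then $Cl(Int(Cl(F)))=Cl(Int(F))\subseteq Cl(F)=F$. In the subspace topology $F$ is discrete, because for $A\in\mathcal A$ the trace on $\mathcal A$ of the basic neighbourhood $\{A\}\cup A$ is $\{A\}$, by almost disjointness. Since $\mathcal A$ is infinite, $F$ is an infinite discrete Tychonoff space: its singletons form an infinite locally finite family of non-empty (hence $\alpha$-open) sets, or equivalently the inclusion $F\hookrightarrow\bbR$ of a countable discrete subset is an unbounded $\alpha$-continuous real function on $F$. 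By the same characterization, $F$ is not countably $\alpha$-pseudocompact, which establishes the corollary.

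The only step that requires a moment's care is the reduction in the second paragraph — confirming that replacing $\alpha$-open sets by their interiors genuinely preserves infiniteness and local finiteness of the family, so that $\alpha$-pseudocompactness of $\Psi$ really follows from its classical pseudocompactness; everything else (Tychonoffness of $X$ and of $F$, closedness hence $\alpha$-closedness of $F$, and discreteness hence failure of $\alpha$-pseudocompactness of $F$) is routine. A fully parallel argument works with $X$ the deleted Tychonoff plank $\bigl([0,\omega_1]\times[0,\omega]\bigr)\setminus\{(\omega_1,\omega)\}$ and $F=\{\omega_1\}\times[0,\omega)$, a closed copy of the countable discrete space, should one prefer an ordinal-theoretic example.
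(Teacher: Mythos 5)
Your proposal is correct in substance, and it is worth noting that the paper itself offers no argument for this corollary at all: it is stated bare, and it does not follow formally from the neighbouring propositions, so an explicit witness of the kind you construct (a Tychonoff countably $\alpha$-pseudocompact space with an $\alpha$-closed, in fact closed, subspace that fails the property) is exactly what a proof requires. Your Isbell--Mr\'owka example does the job: $\Psi$ is Tychonoff and pseudocompact by maximality of $\mathcal{A}$; the observation that a non-empty $\alpha$-open set has non-empty interior, together with the fact that infinitely many members of a locally finite family cannot share one interior, correctly transfers classical pseudocompactness to the statement ``every locally finite family of non-empty $\alpha$-open sets is finite''; and $\mathcal{A}$ is an infinite closed discrete subspace, whose singletons give an infinite locally finite family of non-empty open (hence $\alpha$-open) sets, so it fails countable $\alpha$-pseudocompactness by the implication the paper actually proved.

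Two small repairs are advisable. First, to conclude that $\Psi$ itself is countably $\alpha$-pseudocompact you invoke the direction (every locally finite family of non-empty $\alpha$-open sets is finite) $\Rightarrow$ (all $\alpha$-continuous real functions are bounded), which the paper asserts but explicitly leaves unproved; you can sidestep this by a direct argument: if $f:\Psi\to\bbR$ is $\alpha$-continuous, then for each $A\in\mathcal{A}$ and $\eps>0$ the $\alpha$-open set $f^{-1}\bigl((f(A)-\eps,f(A)+\eps)\bigr)$ contains all but finitely many points of $A$, so $f(n)\to f(A)$ along $A$; maximality of $\mathcal{A}$ then forces $f$ to be bounded on $\bbN$, hence on all of $\Psi$. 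Second, an infinite maximal almost disjoint family is uncountable, so $F=\mathcal{A}$ does not embed in $\bbR$ as you parenthetically suggest; drop that aside (or map a countably infinite subset of $F$ onto $\bbN$ and the rest to $0$), and stipulate at the outset that $\mathcal{A}$ is infinite. With these adjustments the argument is complete, and the deleted Tychonoff plank alternative you mention works the same way.
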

	\begin{prop}
		Countably $\alpha$-pseudocompactness is not finitely multiplicative.
	\end{prop}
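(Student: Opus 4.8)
The plan is to produce a counterexample: two countably $\alpha$-pseudocompact spaces $X$ and $Y$ whose product $X\times Y$ is not countably $\alpha$-pseudocompact. For $X$ and $Y$ I would take the classical countably compact subspaces of $\beta\bbN$ of Nov\'ak and of Terasaka, which satisfy $X\cap Y=\bbN$ with $\bbN$ a discrete open subspace of each of $X$ and $Y$. Since $X$, $Y$ and $X\times Y$ are subspaces of the compact Hausdorff --- hence Tychonoff --- space $\beta\bbN$ (respectively $\beta\bbN\times\beta\bbN$), they are Tychonoff, so it makes sense to ask whether they are countably $\alpha$-pseudocompact.

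The first step is the auxiliary claim that every countably compact Tychonoff space $Z$ is countably $\alpha$-pseudocompact. By the proposition above characterizing countable $\alpha$-pseudocompactness through locally finite families of non-empty $\alpha$-open sets, it suffices to rule out an infinite locally finite family $\{U_i:i\in\bbN\}$ of non-empty $\alpha$-open subsets of $Z$. The two ingredients are: (a) a non-empty $\alpha$-open set $U$ has non-empty interior, because $Int(U)=\emptyset$ would force $U\subseteq Int(Cl(Int(U)))=\emptyset$; and (b) if $\{U_i\}$ is locally finite then so is $\{Int(U_i)\}$, and picking $x_i\in Int(U_i)$ produces a set $\{x_i:i\in\bbN\}$ in which every point lies in only finitely many of the $Int(U_i)$ and which is therefore closed and discrete --- an infinite closed discrete subspace of $Z$, contradicting countable compactness. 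Applying this to $X$ and $Y$ shows both are countably $\alpha$-pseudocompact.

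The second step is to show $X\times Y$ is not countably $\alpha$-pseudocompact. Put $D=\{(n,n):n\in\bbN\}\subseteq X\times Y$. Since $\{n\}$ is open in $\beta\bbN$, each singleton $\{(n,n)\}=(\{n\}\cap X)\times(\{n\}\cap Y)$ is open in $X\times Y$, so every point of $D$ is isolated. Moreover $D$ is closed in $X\times Y$: if $(p,q)$ were in its closure, then every neighbourhood of $p$ would meet every neighbourhood of $q$ inside $\bbN$, so $p$ and $q$ could not be separated, forcing $p=q\in X\cap Y=\bbN$ and hence $(p,q)\in D$. Consequently $\{\{(n,n)\}:n\in\bbN\}$ is an infinite family of non-empty open --- in particular $\alpha$-open --- subsets of $X\times Y$, and it is locally finite: a point of $D$ has itself as a neighbourhood meeting exactly one member, and a point outside $D$ has, since $D$ is closed, a neighbourhood meeting none. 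By the same proposition $X\times Y$ is not countably $\alpha$-pseudocompact, and since $X$ and $Y$ are, countable $\alpha$-pseudocompactness is not finitely multiplicative.

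The main difficulty is the only non-elementary ingredient, the existence of the Nov\'ak--Terasaka spaces: these are built by a transfinite recursion of length $\mathfrak{c}$ inside $\beta\bbN$, at each stage enumerating the countably infinite subsets produced so far and adjoining, alternately, a cluster point of a chosen one to $X$ or to $Y$, so arranged that both growing sets stay of size $<\mathfrak{c}$ and continue to meet only in $\bbN$. The properties one must extract from the construction are precisely countable compactness of $X$ and of $Y$, the equality $X\cap Y=\bbN$, and the fact that $\bbN$ is a discrete open subspace of each; everything else above --- the non-empty-interior observation, the closedness of $D$, and the locally-finite-family criterion of the preceding proposition --- is routine.
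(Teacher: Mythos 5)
The paper states this proposition with no proof and no example at all, so there is nothing of the authors' to compare your argument against; your proposal supplies the missing counterexample, and it is the classical one (Nov\'ak's pair of countably compact subspaces $X,Y\subseteq\beta\bbN$ with $X\cap Y=\bbN$, whose product contains the diagonal copy of $\bbN$ as an infinite clopen discrete set), correctly adapted to the $\alpha$-setting. Your two lemmas check out: a non-empty $\alpha$-open set has non-empty interior, and choosing points in the interiors of an infinite locally finite family yields an infinite closed discrete set, which is impossible in a countably compact space; the closedness of $D=\{(n,n):n\in\bbN\}$ in $X\times Y$ follows exactly as you say from Hausdorffness of $\beta\bbN$ and $X\cap Y=\bbN$. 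Two remarks. First, your positive half (that $X$ and $Y$ are countably $\alpha$-pseudocompact) leans on the direction of the paper's characterization proposition that the authors left ``to the reader''; if you want to be self-contained you can argue directly from the definition: an unbounded $\alpha$-continuous $f$ gives the decreasing $\alpha$-open sets $U_n=f^{-1}(\{t:|t|>n\})$, and since $Int(U_n)$ is dense in the open set $Int(Cl(Int(U_n)))$ one again extracts an infinite family of non-empty open sets that is locally finite in the original topology, hence an infinite closed discrete set. Likewise, for the negative half you can bypass the characterization entirely: $D$ is clopen and discrete in $X\times Y$, so the function equal to $n$ at $(n,n)$ and $0$ off $D$ is continuous, hence $\alpha$-continuous, and unbounded. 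Second, a small slip in your sketch of the transfinite construction: what matters is not that the growing sets stay of size $<\mathfrak{c}$ (the finished spaces have size $\mathfrak{c}$), but that at each stage they have size at most $\mathfrak{c}<2^{\mathfrak{c}}$, while the set of cluster points of any countably infinite subset of $\beta\bbN$ has size $2^{\mathfrak{c}}$, so a cluster point avoiding the other set can always be chosen; with that correction the construction, and hence your whole argument, is sound.
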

	\begin{prop}
		If the topological space $(X,\tau)$ is Tychonoff and $(Y,\sigma)$ is countably $\alpha$-pseudocompact and $f:X\longrightarrow Y$ is $\alpha$-continuous perfect function, then $X$ is countably $\alpha$-psuedocompact.  
	\end{prop}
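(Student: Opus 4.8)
The natural route is contraposition via the characterisation proved just above: for a Tychonoff space, \emph{countably $\alpha$-pseudocompact} is equivalent to \emph{every locally finite family of non-empty $\alpha$-open subsets being finite}. So suppose $X$ is not countably $\alpha$-pseudocompact and fix a countable locally finite family $\mathcal{U}=\{U_n:n\in\bbN\}$ of non-empty $\alpha$-open subsets of $X$ (here I read ``$\alpha$-continuous perfect'' as: $f$ is $\alpha$-continuous, onto, $\alpha$-closed, with $\alpha$-compact fibres $f^{-1}(y)$). The aim is to extract from $\mathcal{U}$ a witness that $Y$ is not countably $\alpha$-pseudocompact.

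The first step pushes local finiteness through $f$. Fix $y\in Y$; the fibre $f^{-1}(y)$ is $\alpha$-compact and each of its points has an open neighbourhood meeting only finitely many $U_n$, so finitely many such neighbourhoods cover $f^{-1}(y)$ and their union $W$ is open, contains $f^{-1}(y)$, and meets only finitely many $U_n$. Since $f$ is $\alpha$-closed and onto, $H_y:=Y\setminus f(X\setminus W)$ is $\alpha$-open, $y\in H_y$, and $f^{-1}(H_y)\subseteq W$; hence $H_y$ meets $f(U_n)$ for only finitely many $n$. Thus $\{f(U_n):n\in\bbN\}$ is a locally finite family of non-empty subsets of $Y$. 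Picking $x_n\in U_n$ and $y_n=f(x_n)$, the same local finiteness together with $\alpha$-compactness of the fibres (which bounds how many $x_n$ can lie over a single point) shows, after thinning, that the $y_n$ are distinct and that $A=\{y_n:n\in\bbN\}$ is an infinite closed $\alpha$-discrete subset of $Y$.

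The second step turns this into an honest locally finite family of non-empty $\alpha$-open subsets of $Y$. Since $Y$ is Tychonoff it is $\alpha$-regular, so each $y_n$ can be separated by an $\alpha$-open set from the $\alpha$-closed set $A\setminus\{y_n\}$; combining such a set with the $H_{y_n}$ from the first step (which already isolates finitely many of the $f(U_m)$ near $y_n$) and shrinking appropriately, one aims to produce $\alpha$-open sets $G_n\ni y_n$ with $\{G_n:n\in\bbN\}$ locally finite and each $G_n\neq\emptyset$. This infinite locally finite family of non-empty $\alpha$-open subsets of $Y$ contradicts the preceding Proposition applied to the (Tychonoff, countably $\alpha$-pseudocompact) space $Y$, forcing $X$ to be countably $\alpha$-pseudocompact.

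I expect the real fight to be in the second step: a perfect map need not carry $\alpha$-open sets to $\alpha$-open sets, so the family $\{f(U_n)\}$ delivered by the first step consists only of non-empty, locally finite sets, and re-building from the accumulation behaviour of the points $y_n$ a genuinely \emph{locally finite} family of \emph{$\alpha$-open} sets in $Y$ is delicate — a careless choice such as $G_n=Y\setminus(A\setminus\{y_n\})$ is $\alpha$-open but not locally finite. If this re-building resists, the fallback is to bypass the open/closed mismatch entirely and argue through the closure/finite-intersection characterisation of the Theorem immediately preceding this Proposition: pull an $\alpha$-closed family of $Y$ with the finite intersection property back by $f^{-1}$ (which preserves $\alpha$-closedness, $f$ being $\alpha$-continuous) and use $\alpha$-compactness of the fibres together with $\alpha$-closedness of $f$ to transport non-emptiness of $\bigcap_n\overline{V_n}$ from $Y$ down to $X$, which is precisely what that characterisation requires for $X$.
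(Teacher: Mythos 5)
The paper states this proposition without any proof, so there is nothing to compare your sketch against; it has to stand on its own, and it does not. Your first step is fine and standard: covering the $\alpha$-compact fibre by finitely many neighbourhoods, forming $W$ and $H_y=Y\setminus f(X\setminus W)$, and concluding that $\{f(U_n):n\in\bbN\}$ is locally finite in $Y$. But the argument stalls exactly where you admit ``the real fight'' is, and that gap cannot be closed: local finiteness of the (non-open) images $f(U_n)$ does not contradict countable $\alpha$-pseudocompactness of $Y$, since a pseudocompact space may well contain an infinite locally finite family of non-empty \emph{arbitrary} sets (the singletons of an infinite closed discrete set, which exist for instance in Mr\'owka's space $\Psi$), and there is in general no way to fatten your points $y_n$ into a locally finite family of non-empty $\alpha$-open sets. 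The fallback is also misdirected: the characterisation you want to verify for $X$ starts from $\alpha$-open families \emph{in $X$} with the finite intersection property, and their images in $Y$ are again not $\alpha$-open, so the hypothesis on $Y$ (which only controls $\alpha$-open families) never gets a grip; pulling $\alpha$-closed families of $Y$ back by $f^{-1}$ proves nothing about $X$'s $\alpha$-open families.

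The deeper problem is that no completion is possible without strengthening the hypotheses, because already the classical analogue of the statement is false. Take $\Psi=\omega\cup\mathcal{A}$ (Mr\'owka's space over a MAD family $\mathcal{A}$), $K=[0,1]$, and $X=(\mathcal{A}\times K)\cup(\Psi\times\{0\})$, a closed Tychonoff subspace of $\Psi\times K$; the projection $\pi\colon X\to\Psi$ is perfect and onto, and $\Psi$ is pseudocompact (in fact in $\Psi$ the $\alpha$-open sets coincide with the open sets, so every real-valued $\alpha$-continuous function on $\Psi$ is continuous and hence bounded, i.e.\ $\Psi$ is countably $\alpha$-pseudocompact in the paper's sense). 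Yet the sets $\{A_n\}\times(0,1]$, for distinct $A_n\in\mathcal{A}$, form an infinite locally finite family of non-empty open subsets of $X$, so $X$ is not pseudocompact and carries an unbounded continuous (hence $\alpha$-continuous) real-valued function; thus $X$ is not countably $\alpha$-pseudocompact although it is the perfect $\alpha$-continuous preimage of a countably $\alpha$-pseudocompact space. This example exploits precisely the open/non-open mismatch where your sketch breaks. The statement becomes provable if one additionally assumes $f$ is $\alpha$-open (then the images $f(U_n)$ themselves give the contradiction), or if pseudocompactness is replaced by countable compactness (where locally finite families of arbitrary non-empty sets must be finite); as stated, your proposal cannot be repaired, and the proposition itself needs an extra hypothesis.
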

	\section{Conclusion}
	The property of being $\alpha$-compact is countably $\alpha$-compact  but the converse is not and the countable union of countably $\alpha$-compact subspaces of $X$ is also countably $\alpha$-compact. Being countably $\alpha$-compact space is satisfied iff every locally finite family of non-empty subsets of such space is finite. The sum of countably $\alpha$-compact spaces for all non-empty countably $\alpha$-compact is countably $\alpha$-compact if and only if each is countably  $\alpha$-compact. Countably $\alpha$-compactness property is invariant under $\alpha$-continuous functions iff every countably $\alpha$-compact subspace of the real line $\bbR$ is $\alpha$-compact iff the class of countably $\alpha$-compact spaces is perfect. The countably $\alpha$-compact space with weight greater than or equal to $\aleph_0$ is the $\alpha$-continuous image of a closed subspace of the cube $D^{\aleph_0}$.
	\section{acknowledgments}
	My sincere thanks to Applied Science Private University in Amman, Jordan for providing a financial support to this research. 
	
\end{document}